\renewcommand{\p@enumii}{} 
\renewcommand{\p@enumiii}{}
\providecommand{\definitionname}{Definition}
\newtheorem{theorem}{Theorem} [section]
\newtheorem{obs}[theorem]{Observation}
\newtheorem{lemma}[theorem]{Lemma}
\newtheorem{cor}[theorem]{Corollary}
\newtheorem{conj}[theorem]{Conjecture}
\theoremstyle{definition}
\newtheorem{exe}[theorem]{Exercise}
\newcommand{\arxiv}[1]{\href{https://arxiv.org/abs/#1}{\texttt{arXiv:#1}}}
\def\N{\mathbb N}
\def\R{\mathbb R}
\def\RR{\tilde R}  
\def\rr{\tilde{r}}
\def\cC{\mathcal C}
\def\cH{\mathcal H}
\begin{document}

\title{Lower bounds for online size Ramsey numbers for paths}
\author{Natalia Adamska}
\author{Grzegorz Adamski}
\affil{Faculty of Mathematics and CS, Adam Mickiewicz University in Pozna\'n}

\maketitle

\begin{abstract}
Given two graphs $H_1$ and $H_2$, an online Ramsey game is  
played on the edge set of $K_\N$. In every round Builder selects an edge and Painter colors 
it red or blue.  Builder is trying to force Painter to create a red copy of $H_1$ or a blue copy of
$H_2$ as soon as possible, while Painter's goal is the opposite. The online (size) Ramsey number $\rr(H_1,H_2)$ is the smallest number of rounds in the game
provided Builder and Painter play optimally. Let $v(G)$ be the number of vertices in the graph $G$ and $v_1(G)$ be the number of vertices of degree 1 in $G$. 
We prove that if $G$ has no isolated vertices, then $\rr(P_7,G)\ge 8v(G)/5-v_1(G)$, $\rr(P_8,G)\ge 18v(G)/11-v_1(G)$ and $\rr(P_9,G)\ge 5v(G)/3-v_1(G)$. In particular $\rr(P_9,P_n)\ge 5n/3-2,$ which with known upper bound implies $\lim_{n\to\infty} \rr(P_9,P_n)/n=5/3.$ We also show that for any fixed $k$, $\lim_{n\to\infty} \rr(P_k,P_n)/n$ exists. 
\end{abstract}

\section{Introduction}

Let $\cH_1$ and $\cH_2$ be nonempty families of finite graphs. Consider the following game  $\RR(\textcolor{red}{\cH_1},\textcolor{blue}{\cH_2})$
between Builder and Painter, played on the infinite board $K_\N$ (i.e. the board is an infinite complete graph with the vertex set $\N$). In every round, Builder chooses a previously unselected
edge of $K_\N$ and Painter colors it red or blue. The game ends if after a move of Painter there is a red copy of a graph from $\cH_1$ 
or a blue copy of a graph from $\cH_2$.  Builder tries to finish the game as soon as possible, while the goal of Painter is the opposite.
Let $\rr(\textcolor{red}{\cH_1},\textcolor{blue}{\cH_2})$ be the minimum number of rounds in the game $\RR(\textcolor{red}{\cH_1},\textcolor{blue}{\cH_2})$, provided both players play optimally. 
If $\cH_i$ consists of one graph $H_i$, we simply write  $\rr(\textcolor{red}{H_1},\textcolor{blue}{H_2})$ and $\RR(\textcolor{red}{H_1},\textcolor{blue}{H_2}).$ We call them an online size Ramsey 
number and an online size Ramsey game, respectively. 

The size Ramsey number $\hat{r}(\textcolor{red}{G},\textcolor{blue}{H})$ is the smallest number $m$ such that there exists a graph $J$ with $m$ edges, where every two-coloring of the edges of $J$ (using red and blue) contains a red copy of the graph ${G}$ or a blue copy of the graph ${H}.$
It's easy to see that $\rr(\textcolor{red}{G},\textcolor{blue}{H})\le\hat{r}(\textcolor{red}{G},\textcolor{blue}{H})$.

One of the most famous open problems about the online Ramsey numbers is Rödl's conjecture (cf. \cite{ar}). It states that 
\[\lim_{t \rightarrow \infty} \frac{\tilde{r}(\textcolor{red}{K_t},\textcolor{blue}{K_t})}{\hat{r}(\textcolor{red}{K_t},\textcolor{blue}{K_t})} = 0,\]
where $K_t$ is a clique on $t$ vertices.

In 2009, Conlon \cite{con} showed that there are infinitely many values of $t$ for which the online
Ramsey number is exponentially smaller than the size Ramsey number,
i.e. there exists $c<1$ such that for infinitely many $t$ we have
\[\tilde{r}(\textcolor{red}{K_t},\textcolor{blue}{K_t})\le c^{t} \hat{r}(\textcolor{red}{K_t},\textcolor{blue}{K_t}).\]
In particular, he obtained
\[\liminf_{t \rightarrow \infty} \frac{\tilde{r}(\textcolor{red}{K_t},\textcolor{blue}{K_t})}{\hat{r}(\textcolor{red}{K_t},\textcolor{blue}{K_t})} = 0.\]
But it is still unknown whether the limit superior is also 0.

Let us consider a similar bound
\begin{equation}\label{1}
    \lim_{n \rightarrow \infty} \frac{\tilde{r}(\textcolor{red}{P_k},\textcolor{blue}{P_n})}{n},\tag{$*$}
\end{equation}
where instead of cliques we take paths $P_n$ on $n$ vertices.
It is worth mentioning here that $\hat{r}(\textcolor{red}{P_k},\textcolor{blue}{P_n}) =\Theta (n)$ (cf. \cite{beck2}).

Cyman, Dzido, Lapinskas and Lo~\cite{lo} proved that the above limit for $k=3$ and $k=4$ is equal to $1.25$ and $1.4$ respectively. For $k\ge 5$ they obtained that the limit inferior is at least $1.5.$ Then they conjectured that (\ref{1}) exist for any fixed $k\ge 5$ and is also equal to 1.5. Recently, Mond and Portier \cite{adva} disproved that conjecture by showing that 
$\rr(\textcolor{red}{P_{10}},\textcolor{blue}{P_n})\ge 1.(6)n-2$. This matches (up to a constant for a fixed $k$) the upper bound $\rr(\textcolor{red}{P_k},\textcolor{blue}{P_n})\le 1.(6)n+12k$ found by Bednarska-Bzd\c{e}ga~\cite{Gosia}. 

We show the limit (\ref{1}) exists for every $k\in \N$ by proving Theorem \ref{granicka}. 
\begin{theorem}\label{granicka}
    For any bipartite graph $H$, the limit
    $$\lim_{n\to\infty} \frac{\rr(\textcolor{red}{H},\textcolor{blue}{P_n)}}{n}$$
    exists (and is finite).
\end{theorem}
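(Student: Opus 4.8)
The plan is to show that the sequence $a_n:=\rr(\textcolor{red}{H},\textcolor{blue}{P_n})$ is subadditive up to an additive constant and a bounded shift, and then to conclude via Fekete's lemma. Fix $s:=v(H)$; since $H$ is bipartite it has a bipartition into two parts of size at most $s$, hence $H\subseteq K_{s,s}$. The core of the argument is the inequality
\begin{equation}\label{eq:key}
 a_{m+n-2s}\ \le\ a_m+a_n+s^2\qquad\text{for all }m,n\ge s .
\end{equation}
Granting \eqref{eq:key}, put $b_k:=a_{k+2s}$; substituting $m=i+2s$, $n=j+2s$ turns \eqref{eq:key} into $b_{i+j}\le b_i+b_j+s^2$ for all $i,j\ge0$. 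Each $a_k$ is finite (indeed $a_k\le\sr(\textcolor{red}{H},\textcolor{blue}{P_k})<\infty$) and nonnegative, so $k\mapsto b_k+s^2$ is a finite, nonnegative, subadditive sequence, and by Fekete's lemma $b_k/k\to\inf_k(b_k+s^2)/k$, a limit in $[0,\infty)$. Since $a_n/n=\big(b_{n-2s}/(n-2s)\big)\big((n-2s)/n\big)$, the ratios $a_n/n$ converge to that same finite limit, which is the assertion of Theorem~\ref{granicka}.

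To prove \eqref{eq:key} I would describe a Builder strategy in $\RR(\textcolor{red}{H},\textcolor{blue}{P_{m+n-2s}})$ finishing within $a_m+a_n+s^2$ moves. Builder uses two vertex-disjoint parts of $K_\N$. On the first part Builder follows an optimal strategy for $\RR(\textcolor{red}{H},\textcolor{blue}{P_m})$: within $a_m$ moves either a red $H$ has appeared (Builder has won) or a blue path $u_1\cdots u_m$ has been created. Then, on the second part (fresh vertices), Builder follows an optimal strategy for $\RR(\textcolor{red}{H},\textcolor{blue}{P_n})$: within $a_n$ further moves Builder wins or a blue path $w_1\cdots w_n$ appears. (Restricted to a single part, Painter's moves form a legal play of the corresponding game, so Builder's guarantee there is unaffected by moves elsewhere; and a red copy of $H$ anywhere immediately ends the game.) Finally Builder runs a \emph{connector}: iterating over $i=0,1,\dots,s-1$ and, for each $i$, over $j=1,\dots,s$, Builder plays the edge $u_{m-i}w_j$ and stops the instant such an edge is colored blue. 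If Builder stops at a pair $(i,j)$, then $u_1\cdots u_{m-i}\,w_j\,w_{j+1}\cdots w_n$ is a blue path on $(m-i)+(n-j+1)\ge m+n-2s$ vertices. If Builder never stops, then all $s^2$ edges between $\{u_m,\dots,u_{m-s+1}\}$ and $\{w_1,\dots,w_s\}$ are red, giving a red $K_{s,s}\supseteq H$. Either way Builder wins, having spent at most $s^2$ moves in the connector, which proves \eqref{eq:key}.

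The one genuinely delicate point is the connector, and this is exactly where bipartiteness is used. Painter can ``sacrifice'' the endpoint of a blue path by coloring every incident edge red, so Builder cannot cheaply force a blue edge out of a prescribed vertex; the remedy is not to commit to a junction vertex but to play \emph{all} edges between the last $s$ vertices of one blue path and the first $s$ of the other --- edges that the current position does not constrain, hence genuinely colored by Painter --- and to exploit that their all-red outcome already contains a red $H$. For non-bipartite $H$ this breaks down: Painter may color every edge between the two blue components red while keeping each component's internal non-consecutive chords blue, thereby avoiding a red $H$ and never joining the two components, so this concatenation approach (and presumably any naive one) fails --- which is why the hypothesis is needed. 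The remaining ingredients, namely the two-part simulation and the Fekete computation, are routine.
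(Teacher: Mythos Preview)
Your argument is correct and shares the paper's key insight: build two vertex-disjoint blue paths and splice them by playing edges between a tail of one and a head of the other, with bipartiteness guaranteeing that an all-red outcome on those edges already contains a red $H$. The differences are in packaging. The paper's connector is thriftier---it plays only the $e(H)$ edges of one fixed bipartite embedding of $H$ (between $a$ vertices of one path and $b$ of the other, where $a+b=v(H)$) rather than all of $K_{s,s}$, and so loses only $v(H)$ vertices rather than $2s$---but this is irrelevant asymptotically. More substantively, the paper iterates the merge many times to obtain the multiplicative bound
\[
\rr(\textcolor{red}{H},\textcolor{blue}{P_m})\ \le\ \left\lceil\frac{m}{n-v(H)}\right\rceil\bigl(\rr(\textcolor{red}{H},\textcolor{blue}{P_n})+e(H)\bigr)
\]
and then finishes with an explicit $\liminf$/$\limsup$ comparison, whereas you extract a single near-subadditive inequality and invoke Fekete's lemma. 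Your route is cleaner and more recognisably standard; the paper's iterated form has the minor side benefit of giving an explicit non-asymptotic inequality relating $\rr(H,P_m)$ to $\rr(H,P_n)$ for arbitrary $m$ and $n$.
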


We also improve the result 
from \cite{adva} by
showing the following theorem. 

\begin{theorem}\label{pkpcn}
Let $G$ be any graph without isolated vertices. Then
    \begin{align*}
    \rr(\textcolor{red}{P_7},\textcolor{blue}{G})&\ge 8v(G)/5-v_1(G),\\
    \rr(\textcolor{red}{P_8},\textcolor{blue}{G})&\ge 18v(G)/11-v_1(G),\\
    \rr(\textcolor{red}{P_9},\textcolor{blue}{G})&\ge 5v(G)/3-v_1(G),
    \end{align*}
    where $v(G)$ is the number of vertices in $G,$ while $v_1(G)$ is the number of vertices of degree $1$ in $G$.
\end{theorem}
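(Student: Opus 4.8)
The plan is a potential‑function argument, carried out separately but in parallel for $k=7,8,9$. For each such $k$ I would fix one strategy for Painter together with a real‑valued potential $\Phi$ defined on coloured configurations (finite subgraphs of $K_\N$ with every edge coloured red or blue), normalised so that $\Phi$ equals $0$ on the empty configuration, and I would establish two facts. First, a \emph{local} property: whenever Builder plays an edge, Painter's strategy lets her colour it so that (i) no red $P_k$ is ever present, and (ii) $\Phi$ increases by at most $1$. Second, a \emph{terminal} property: every configuration containing a blue copy of a graph $G$ with no isolated vertex satisfies $\Phi\ge\alpha_kv(G)-v_1(G)$, where $\alpha_7=8/5$, $\alpha_8=18/11$, $\alpha_9=5/3$. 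Given these, the theorem is immediate: Painter following the strategy never loses through a red $P_k$, so a game lasting $T$ rounds can only have ended with a blue $G$, and then $\alpha_kv(G)-v_1(G)\le\Phi_{\mathrm{final}}\le T$; since this holds whatever Builder does, $\rr(P_k,G)\ge\alpha_kv(G)-v_1(G)$. (Equivalently one could phrase the whole thing as a direct analysis of the final position, writing $T=e_{\mathrm{red}}+e_{\mathrm{blue}}$, but the potential formulation keeps the bookkeeping local.)

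For the strategy I would have Painter keep the red graph a linear forest — refusing any edge that would create a red cycle, a red vertex of degree $3$, or a red $P_k$ — together with one further ``no long clean path'' rule: she also refuses certain red edges that would extend or merge red paths once the resulting path is already long, the point being to forbid the construction in which Builder first grows a clean red path on almost $k$ vertices and then cheaply threads a blue $G$ through all of its vertices, a construction that defeats every naive Painter (in particular any fixed cap on red‑path length). The potential $\Phi$ I would take to be a sum of local weights, one per vertex, where the weight of a vertex depends on its red degree, on the number of vertices of the red path through it and its position on that path, and on its blue degree; the weights are tuned so that a vertex which is a leaf of the blue graph contributes a full unit less than a generic internal vertex — this is exactly what produces the $-v_1(G)$ term — and so that the configurations meeting the bound with equality are the expected sparse ones (a red linear forest of short paths carrying a blue $G$ that reuses its vertices). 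The three values $8/5$, $18/11$, $5/3$ come out as the solutions of three small optimisation problems balancing ``each round adds at most $1$ to the total weight'' against ``a blue $G$ forces average weight $\alpha_k$, with a unit discount per leaf''; because the red‑path lengths Painter tolerates grow with $k$, these problems genuinely differ for $k=7,8,9$, and they stabilise at $5/3$ for every $k\ge9$ — matching the upper bound $\rr(P_k,P_n)\le 5n/3+O(k)$ and hence pinning down $\lim_n\rr(P_9,P_n)/n=5/3$.

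The terminal property is the easier half. Given a blue copy of $G$, each edge of $G$ was coloured blue only because the corresponding move was forced, and the linear‑forest invariant (plus the handful of further refusals imposed by the extra rule) leaves only a short list of ways a move can be forced: one endpoint already has red degree $2$, or both endpoints are ends of red paths whose total number of vertices is at least $k$, or the edge would close a red cycle. Running through this list, and shifting weight from vertices of large $G$‑degree — which are already over‑paid by their incident blue edges of $G$ — onto vertices of small $G$‑degree, one shows that the vertices of $G$ carry total weight at least $\alpha_kv(G)-v_1(G)$; the pinch point is vertices of $G$‑degree $2$, which need the largest share coming from incident red edges, and this is what fixes $\alpha_k$. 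The genuinely delicate part, and what I expect to be the main obstacle, is the local property: that Painter can always keep $\Delta\Phi\le1$. This is a finite but fiddly case analysis over which colour Painter assigns and over the ``types'' of the two endpoints of Builder's edge, made awkward by the fact that colouring an edge — especially a red edge that fuses two red paths — simultaneously changes the types, hence the weights, of several vertices (both endpoints and the neighbours along both paths). One has to verify that in every configuration at least one legal colour is weight‑cheap, and it is precisely the ``no long clean path'' refusal rule that rescues the otherwise‑fatal cases in which Builder tries to cash a single long red path into many forced blue edges. Making the vertex weights and this refusal rule mutually consistent, for each of $k=7,8,9$, is where the real work of the proof lies.
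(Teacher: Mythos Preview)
Your plan is essentially the paper's approach: a per-vertex potential $\Phi=\sum_v c(v)$ with $c(v)$ depending on the red component of $v$ and on the blue degree of $v$, a Painter strategy that keeps $\Delta\Phi$ bounded per round while forbidding any red $P_k$ and every red cycle, and a terminal lower bound on $\Phi$ once a blue $G$ appears.

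Two points worth sharpening. First, a concrete device you will need and have only gestured at with ``position on that path'': for each \emph{big} red component the paper designates a \emph{capital} (an edge for $k\in\{7,9\}$, a vertex for $k=8$), and $c(v)$ is then determined by the distance from $v$ to the capital of its component together with the blue degree of $v$. Capitals are added only when a new big component is created and are never moved; this is what keeps the potential well-defined after merges and keeps the local case analysis finite. Second, your terminal argument is more elaborate than necessary and, as phrased, slightly off --- it is not true that every blue edge of $G$ was \emph{forced} (Painter's strategy plays many blue edges that could legally have been red), so reasoning case by case over ``why this edge was blue'' will not be clean. The paper's argument is simpler: once the table is chosen so that every vertex with blue degree $\ge 2$ has $c(v)\ge M_2$ and every vertex with blue degree $1$ has $c(v)\ge M_1$, a blue copy of $G$ immediately gives $\Phi\ge M_2\bigl(v(G)-v_1(G)\bigr)+M_1\,v_1(G)$, and the weights are tuned so that $M_2/\Delta=\alpha_k$ and $(M_2-M_1)/\Delta=1$, where $\Delta$ is the per-round cap. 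The genuinely hard part is, as you say, the local step, and there the paper does exactly the finite endpoint-type case analysis you anticipate, with explicit numerical tables.
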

Its direct consequence is the following result, which (with the result of \cite{Gosia}) implies that the limit~(\ref{1}) exists and is equal to $1.(6)$ for $k\ge 9$.
\begin{cor}\label{pkpn}
For every $n\in\N$
\begin{align*}
    \rr(\textcolor{red}{P_7},\textcolor{blue}{P_n})&\ge 1.6n-2,\\
    \rr(\textcolor{red}{P_8},\textcolor{blue}{P_n})&\ge 1.(63)n-2,\\
    \rr(\textcolor{red}{P_9},\textcolor{blue}{P_n})&\ge 1.(6)n-2. 
    \end{align*}
\end{cor}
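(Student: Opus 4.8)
The plan is to obtain Corollary~\ref{pkpn} as an immediate specialization of Theorem~\ref{pkpcn} to the case $G=P_n$. First I would check that $P_n$ satisfies the hypothesis of Theorem~\ref{pkpcn}: for $n\ge 2$ the path $P_n$ has no isolated vertices, so the theorem applies directly; for $n=1$ all three claimed bounds are negative and hence trivially true, so this degenerate case needs no separate argument.

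Next I would record the two graph parameters appearing in Theorem~\ref{pkpcn}. Clearly $v(P_n)=n$. Moreover, for $n\ge 2$ a path has exactly two vertices of degree $1$, namely its two endpoints, while every internal vertex has degree $2$; hence $v_1(P_n)=2$. Substituting $v(G)=n$ and $v_1(G)=2$ into the three inequalities of Theorem~\ref{pkpcn} then yields
\[
\rr(P_7,P_n)\ge \tfrac{8n}{5}-2,\qquad
\rr(P_8,P_n)\ge \tfrac{18n}{11}-2,\qquad
\rr(P_9,P_n)\ge \tfrac{5n}{3}-2.
\]
The last step is purely cosmetic: rewriting the leading coefficients in the repeating-decimal notation of the statement, using $8/5=1.6$, $18/11=1.636363\ldots=1.(63)$, and $5/3=1.666\ldots=1.(6)$, gives exactly the three displayed bounds of the corollary.

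I do not expect any genuine obstacle here, since all the combinatorial work is carried by Theorem~\ref{pkpcn}; the corollary is just the evaluation of that theorem on the family of paths together with a change of notation for the constants. The only point requiring a moment's care is the small-$n$ regime, and as noted above the bounds there are vacuous, so no case analysis is actually needed. (The genuinely interesting consequence — namely that, combined with the upper bound $\rr(P_k,P_n)\le 1.(6)n+12k$ from~\cite{Gosia}, the limit~(\ref{1}) exists and equals $1.(6)$ for all $k\ge 9$ — then follows by sandwiching the ratio $\rr(P_k,P_n)/n$ between $5/3-2/n$ and $5/3+12k/n$ and letting $n\to\infty$.)
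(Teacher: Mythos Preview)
Your proposal is correct and matches the paper's approach exactly: the paper presents Corollary~\ref{pkpn} as an immediate consequence of Theorem~\ref{pkpcn}, obtained by substituting $G=P_n$ (so $v(G)=n$ and $v_1(G)=2$) into the three inequalities. Your handling of the trivial $n=1$ case and the decimal rewriting of $8/5$, $18/11$, $5/3$ is exactly what is implicitly assumed in the paper.
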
 

The main tool used in the above theorem is the following lemma, which is of independent interest.
    \begin{lemma}\label{lemma}
    Let $\cC$ be the family of all cycles and $\cH(\alpha,\beta,x)$ be the family of all graphs $H$ without isolated vertices satisfying $\alpha v(H)+\beta v_1(H)\ge x$, where $v_1(H)$ is the number of vertices of degree $1$ in $H$. Then
    \begin{align}
    \rr(\textcolor{red}{\cC\cup \{P_7\}},\textcolor{blue}{\cH(8/5,-1,x)})\ge x,\\
    \rr(\textcolor{red}{\cC\cup \{P_8\}},\textcolor{blue}{\cH(18/11,-1,x)})\ge x,\\
    \rr(\textcolor{red}{\cC\cup \{P_9\}},\textcolor{blue}{\cH(5/3,-1,x)})\ge x.  
    \end{align}
\end{lemma}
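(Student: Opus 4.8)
The plan is to describe an explicit Painter strategy that survives for at least $x-1$ rounds against any Builder, in each of the three games simultaneously, and then translate this into the stated lower bounds. The natural idea is a potential/weighting argument: Painter colors so as to keep the red graph free of any cycle and of $P_k$ (for $k=7,8,9$ respectively), while charging the blue edges against a budget measured by the quantity $\alpha v(H)+\beta v_1(H)$ of the final blue graph $H$. Concretely, I would have Painter maintain the invariant that the red subgraph is always a linear forest (disjoint union of paths) with every path on at most $k-2$ edges, so that no red $P_k$ and no red cycle is ever completed; Painter colors an edge red whenever doing so preserves this invariant, and blue otherwise. The crux is then to bound the number of blue edges from below in terms of $v(H)$ and $v_1(H)$, i.e. to show that whenever Builder forces many blue edges, the blue graph $H$ must be ``inefficient'' in the sense that $\tfrac85 v(H)-v_1(H)$ (resp. with $18/11$, $5/3$) is large — equivalently, that each blue edge can be made to pay for at most $\tfrac58$ (resp. $\tfrac{11}{18}$, $\tfrac35$) of a vertex after correcting for degree-one vertices.

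The key steps, in order, are: (1) Set up Painter's rule precisely — red if the resulting red graph is still a union of paths each with $\le k-2$ edges, blue otherwise — and observe that this makes a red $\cC\cup\{P_k\}$ impossible, so the game can only end on the blue side. (2) Identify, for each edge Builder plays, exactly which configurations force Painter to color blue: these are edges whose endpoints lie in red components in a way that any red coloring would create a long red path or a red cycle (e.g. joining two endpoints of red paths whose lengths sum to $\ge k-2$, or an edge inside or between red components closing a cycle). (3) Run a discharging argument on the final blue graph $H$: assign to each vertex $v$ of $H$ an initial charge reflecting $\alpha$ (and an extra $-\beta=+1$ if $\deg_H v=1$), and redistribute charges across blue edges using the structural information from step (2) — the point is that a blue edge was forced only because its endpoints already carried substantial red structure, and that red structure ``uses up'' board vertices (the $k-2$ bound on red path lengths is what produces the denominators $5,11,3$ in $8/5,18/11,5/3$). (4) Conclude that the number of blue edges is at least $\alpha v(H)+\beta v_1(H)$, hence at least $x$ when $H\in\cH(\alpha,-1,x)$, so Builder needs at least $x$ rounds.

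I expect the main obstacle to be step (3): getting the constants exactly right. The value $8/5$ for $P_7$, $18/11$ for $P_8$, and $5/3$ for $P_9$ are delicate, and presumably come from a tight analysis of how red paths of length up to $k-2=5,6,7$ can be amalgamated — one has to find, for each $k$, the worst-case pattern of red-path lengths (and the way blue edges attach to their endpoints versus their interiors) that Builder can exploit, and verify that even in that worst case the blue edge count meets the bound. A secondary subtlety is handling degree-one vertices of $H$ correctly: the $-v_1(H)$ term means pendant blue vertices are essentially ``free'', so Painter's accounting must not over-charge leaves, and I would handle this by giving leaves of $H$ a compensating charge and checking the discharging still balances. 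Finally, one must make sure the three claims are obtained from a single Painter strategy parametrized by $k$, so that the same argument with $k-2\in\{5,6,7\}$ yields all three lines at once; I would phrase the discharging lemma once in terms of a general red-path-length bound $\ell$ and then substitute $\ell=5,6,7$.
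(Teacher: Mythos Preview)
Your proposal has two genuine gaps: the Painter strategy and the accounting.

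The greedy rule ``colour red whenever the red graph remains a linear forest with paths of at most $k-2$ edges'' is not the strategy that yields $8/5$, $18/11$, $5/3$. In the paper Painter's decision depends not only on the red-component types of the two endpoints but also on their \emph{blue} degrees. For instance, in the $P_9$ game, when $u$ is red-isolated and $v$ lies at distance $2$ from the capital of a large red component, Painter plays blue if $u$ has blue degree $0$ but red if $u$ has blue degree $1$ (cases (D) versus (I)). A purely red-greedy rule cannot make this distinction, and the tight constants come precisely from balancing the cost of growing red structure against the cost of raising blue degrees --- a trade-off your rule ignores. Moreover, your step~(4) asserts that the number of \emph{blue} edges is at least $\alpha v(H)-v_1(H)$; this is false as stated, since a blue $P_n$ has only $n-1$ edges while $\tfrac53 n-2>n-1$ for all $n\ge 2$. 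You must count all rounds, red and blue together, and your discharging sketch on the blue graph $H$ alone provides no mechanism for crediting the red edges. The paper handles this with a potential $f$ on the whole host graph: every vertex carries a weight from an explicit table indexed jointly by its red-component type (isolated, in a red $P_2$, in a red $P_3$, or at distance $i$ from a designated ``capital'' of a larger component) \emph{and} its blue degree ($0$, $1$, or $\ge 2$). One then checks, over a dozen or so cases, that $f$ rises by at most a fixed constant per round under Painter's rule, and that any terminal host graph has $f$ at least that constant times $x$. This simultaneous red/blue bookkeeping at each vertex is exactly what your outline is missing, and it is where essentially all of the work lies.

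Finally, the hope of a single argument parametrised by $\ell=k-2$ does not materialise: the paper uses capital \emph{edges} for $P_7$ and $P_9$ but capital \emph{vertices} for $P_8$, and the $P_8$ analysis needs an extra vertex class for red $P_4$ components. The three weight tables (obtained by linear programming, cf.\ Appendix~A) are genuinely different, and the constants $8/5$, $18/11$, $5/3$ do not fall out of one formula in $\ell$.
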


To prove this lemma, we use a method similar to the one obtained by Mond and Portier \cite{adva}. We also create a potential function, but with modifications. The main idea of the proof remains the same: Painter wants the potential to increase slowly. This guarantees that the specified graphs will not appear too quickly. From this, we obtain a lower bound for the online Ramsey numbers for the games in the lemma.

Organization of the paper: The proof of the Lemma \ref{lemma} is presented in sections 3–5. This lemma is then used to prove Theorem \ref{pkpcn}, whose proof appears in section 6. Theorem \ref{granicka} is proved in section 7. The appendices include: a description of how the Painter’s strategy and the potential function were determined, a table summarizing the best known upper and lower bounds for the number $\rr(P_k,P_n)$, and a few exercises that allow the reader to apply the knowledge gained from the article in practice.



\section{Notation}

Let $\cC$ be the family of all cycles and $\cH(\alpha,\beta,x)$ be the family of all graphs $H$ without isolated vertices satisfying $\alpha v(H)+\beta v_1(H)\ge x$, where $v(H)$ is the number of vertices in $H$ and $v_1(H)$ is the number of vertices of degree 1 in $H$. Let 
\begin{itemize}
\item $\RR(\textcolor{red}{\cC\cup \{P_7\}},\textcolor{blue}{\cH(8/5,-1,x)})$ be called the $P_7$-game,
\item $\RR(\textcolor{red}{\cC\cup \{P_8\}},\textcolor{blue}{\cH(18/11,-1,x)})$ be called the $P_8$-game, 
\item $\RR(\textcolor{red}{\cC\cup \{P_9\}},\textcolor{blue}{\cH(5/3,-1,x)})$ be called the $P_9$-game.
\end{itemize}
Given a partially played Ramsey game, by the \emph{host graph} we mean the colored graph $G$, where $V(G)=\mathbb{N}$ and $E(G)$ is the set of all red and blue edges already colored on the board. Let graph consisting of all vertices and red edges of the host graph be called the \emph{red host graph}. Analogously, we define the \emph{blue host graph}. We divide vertices in the red host graph depending on the connected component. Each of these components can be either:
\begin{itemize}
    \item one vertex (O-shaped),
    \item $P_2$ path (I-shaped),
    \item $P_3$ path (L-shaped),
    \item $P_4$ path (N-shaped),
    \item some other graph (for example F-shaped).
\end{itemize}

Let $O,I,L,N$ be the sets of vertices in these categories, respectively. Let us also define the set $F,$ which differs slightly depending on the game we are considering. 
\begin{itemize}
    \item $F=V\setminus (O\cup I\cup L)$ in the $P_7$-game,
    \item $F=V\setminus (O\cup I\cup L\cup N)$ in the $P_8$-game,
    \item $F=V\setminus (O\cup I\cup L)$ in the $P_9$-game.
\end{itemize}
While the sets $O,I,L$ are used in all three games,  $N$ is used only in the 
$P_8$-game. Therefore $F$ is always the set of ``other'' vertices.

Red components with vertices in $F$ will be called big. For every big red component in the red host graph we will choose a capital. The capital is either a vertex (in the $P_8$-game) or an edge (in the other two games). Let $T$ be the set of capitals. Let $F_i$ be the set of vertices with distance $i$ to the capital. The distance between a vertex $u$ and an edge $xy$ is defined by the formula $dist(u,xy)=\min(dist(u,x),dist(u,y))$.

Note that if the red component has no red cycles, then the following statements are equivalent:
\begin{itemize}
    \item We can choose a capital edge in such a way, that the distance from every vertex in this component to this capital edge is less than 3.
    \item This component has no path $P_7$. 
\end{itemize}
In particular, if the $P_7$-game is not finished, then $F=F_0\cup F_1\cup F_2$. Similarly, if the $P_8$-game is not finished, then $F=F_0\cup F_1\cup F_2$. The same with the $P_9$-game and $F=F_0\cup F_1\cup F_2\cup F_3$. Moreover, let $F_{3+}=\bigcup_{i \ge 3}F_i,$ $F_{4+}=\bigcup_{i \ge 4}F_i.$ 

Let the \textit{blue degree} $d_B(v)$ of a vertex $v$ means the degree of $v$ in the blue host graph. Finally, let us split every of the sets $O,I,L,N,F_0,F_1,F_2,F_3$ into 3 subsets depending on the blue degree. For $X\in \{O,I,L,L_i,N,N_i,F,F_i\}$ let
\begin{itemize}
    \item $V^0(G)=\{v\in G: d_B(v)=0\}$, $X^0(G)=X\cap V^0(G)$,
    \item $V^1(G)=\{v\in G: d_B(v)=1\}$, $X^1(G)=X\cap V^1(G)$,
    \item $V^{2+}(G)=\{v\in G: d_B(v)\ge 2\}$, $X^{2+}(G)=X\cap V^{2+}(G)$,
    \item $X(G)=X$,
\end{itemize}
where $G$ is a host graph.  
		
        In this work, the main role will be played by the \textit{potential function} 
		$f(G,T)$
        where $T$ is a set of capitals in a graph $G.$ 
        If $T$ is clear from the context, we write concisely $f(G).$

        To prove the Lemma \ref{lemma}, we will use the following observation, which does not require a proof.
		\begin{obs}\label{obs}
			Let $K$ and $H$ be graphs and $f$ be a function (a potential) defined for every possible host graph. Suppose that in the game $\RR(\textcolor{red}{K},\textcolor{blue}{H})$:
			\begin{itemize}
				\item potential of empty graph is 0,
				\item for every Builder's move, Painter can color the edge in such a~way, that the potential of the host graph does not increase by more than $x$, where $x>0$,
				\item potential of every host graph that ends the game 
				is at least $y$.
			\end{itemize}
			Then $\rr(\textcolor{red}{K},\textcolor{blue}{H})\ge y/x.$ 
		\end{obs}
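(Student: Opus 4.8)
The plan is to exhibit an explicit Painter strategy and to track the potential round by round. Fix the Painter strategy $\sigma$ that, on each of Builder's moves, colors the presented edge so that the potential of the resulting host graph exceeds the potential of the previous host graph by at most $x$; the second hypothesis guarantees that such a color always exists. I will show that, against $\sigma$, the game lasts at least $y/x$ rounds no matter how Builder plays, and then read off the claimed bound from the definition of $\rr$.

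First I would set up the bookkeeping. Let $G_0$ be the empty host graph and, for $m\ge 1$, let $G_m$ be the host graph after the first $m$ rounds, i.e. after Builder's $m$-th edge has been colored by $\sigma$. By the first hypothesis $f(G_0)=0$, and by the very choice of $\sigma$ we have $f(G_m)\le f(G_{m-1})+x$ for every $m\ge 1$. A one-line induction on $m$ then gives $f(G_m)\le mx$ for all $m\ge 0$.

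Next I would invoke the third hypothesis. Suppose the game, played against $\sigma$, ends after round $m$. Then $G_m$ is a host graph that ends the game, so $f(G_m)\ge y$. Combining this with $f(G_m)\le mx$ and using $x>0$ gives $m\ge y/x$. Since this holds for every line of play Builder may choose against $\sigma$, Painter has a strategy that forces the game to last at least $y/x$ rounds; as $\rr(\textcolor{red}{K},\textcolor{blue}{H})$ is the number of rounds under optimal play (with Painter trying to prolong the game), we conclude $\rr(\textcolor{red}{K},\textcolor{blue}{H})\ge y/x$.

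I do not expect any genuine obstacle here — this is a soft bookkeeping argument and is exactly why the paper flags it as requiring no proof. The only point deserving a word of care is the last step, namely passing from ``Painter has a strategy forcing at least $y/x$ rounds'' to the stated inequality for $\rr$; this is immediate from the definition of $\rr$ as the value of the game under optimal play, and it even renders the bound valid in the degenerate case where Builder never finishes (the number of rounds is then infinite and the inequality is trivial). One could additionally note that, since the number of rounds is a positive integer, the bound may be sharpened to $\rr(\textcolor{red}{K},\textcolor{blue}{H})\ge\lceil y/x\rceil$, although the weaker form stated suffices for all later applications.
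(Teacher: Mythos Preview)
Your argument is correct and is exactly the standard bookkeeping that the observation encodes; the paper itself omits a proof, declaring the statement self-evident. There is nothing to add or compare.
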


\section[P9-game]{$P_9$-game}
We present the proof of Lemma \ref{lemma}, starting with the $P_9$-game. It consists of the following steps:
\begin{itemize}
    \item definition of a potential function,
\item presentation of Painter's strategy,
\item analysis of the rate of potential change,
\item conclusion.
\end{itemize}
The reasoning for the $P_7$-game and
the $P_8$-game, covered in the sections $4$ and $5$, is analogous, so we omit the details.

\subsection{Potential function}\label{9potential}
Now we are ready to define a potential function for a host graph $G$ with a set of capital edges $T.$ Let $c_{T,G}:V(G)\to \R$ be a function, which assigns every vertex in $G$ a number depending on the type of the vertex and is defined by the following table. For example $c_{T,G}(v)=14$ for $v\in L^1$ or $v\in F_0^1.$

\begin{table}[h]\label{figure1}
		\begin{center}
			\begin{tabular}{|r||*{7}{c|}}\hline
				\backslashbox{\textcolor{blue}{blue}}{\textcolor{red}{red}}
				&$\cellcolor{red!25}O$&\cellcolor{red!25}$I$&\cellcolor{red!25}$L,F_0$
				&$\cellcolor{red!25}F_1$&\cellcolor{red!25}$F_2$&\cellcolor{red!25}$F_3$&\cellcolor{red!25}$F_{4+}$\\\hline\hline
				\cellcolor{blue!25}0 &0&6&8&10&12&20&$\infty$\\\hline
				\cellcolor{blue!25}1 &8&13&14&15&16&20&$\infty$\\\hline
				\cellcolor{blue!25}2+ &20&20&20&20&20&20&$\infty$\\\hline
			\end{tabular}
		\end{center}
        \end{table}

Let $f(G,T)=\infty$ if $G$ has a red cycle or the set $F_{4+}$ is nonempty. Otherwise, let
$$f(G,T)=\sum_{v\in V(G)} c_{T,G}(v).$$
If $G$ is clear from the context, we write $c_T$ instead of $c_{T,G}.$

\subsection{Painter's strategy}\label{9strategy}

Painter's choice of color for the edge $uv$ depends only on the types of $u$ and $v$. The presented strategy is symmetric for $u,v$. It means that whenever we say that $u\in X$ and $v\in Z$, Painter colors an edge $uv$ in a particular color, we implicitly say that Painter does the same when $u\in Z$ and $v\in X$.

In this strategy, no edge is ever removed from $T.$ From the other side, an edge is added to $T$ if and only if a new big red component appears.

In the following four cases, Painter will color the edge $uv$ blue.

\begin{enumerate}[{\color{blue} (A)}]
    \item $u\in V^{2+}\cup F_3$;
    \item $u,v\in L\cup F$;
    \item $u\in I, v\in F_1\cup F_2$;
    \item $u\in O^0, v\in F_2.$
\end{enumerate}

Note that point $\color{blue} (B)$ guarantees that no red cycle will ever appear.

In the remaining cases, listed below, she colors the edge red. In three cases the new big red component will appear, so we have to add an edge to $T$. 

\begin{enumerate}[{\color{red}(A)}]
\setcounter{enumi}{4}
\item $u,v\in O$;
\item $u\in O, v\in I$;
\item $u\in O, v\in L$ $\to$ add $vw$ to $T$, where $w\neq u$ and $vw$ is red;
\item $u\in O, v\in F_0\cup F_1$;
\item $u\in O^1, v\in F_2$;
\item $u,v\in I$ $\to$ add $uv$ to $T$;
\item $u\in I,v\in L$ $\to$ add $uv$ to $T$;
\item $u\in I,v\in F_0$.
\end{enumerate}

It is possible that the pair $u,v$ satisfies both a blue case and a red one. For example, if $u,v\in O^2$, then the pair $u,v$ satisfies 
both {\color{blue} (A)} and {\color{red} (E)}. In those cases Painter can use any color, so let us settle on blue.

The table below shows Painter's strategy for all combinations of types of vertices $u$ and $v$ that are not considered by cases {\cellcolor{blue!10}\color{blue}(A)} and {\cellcolor{blue!10}\color{blue}(B)}.

\begin{figure}[h]
\begin{center}
\begin{tabular}{|r||*{9}{c|}}\hline
\backslashbox{$u$}{$v$}
				&$\cellcolor{red!25}O$&\cellcolor{red!25}$I$&\cellcolor{red!25}$L$
				&$\cellcolor{red!25}F_0$&\cellcolor{red!25}$F_1$&\cellcolor{red!25}$F_2$\\\hline\hline
\cellcolor{red!25}$O^0$ &\cellcolor{red!10}\color{red}(E)&\cellcolor{red!10}\color{red}(F)&\cellcolor{red!10}\color{red}(G)&\cellcolor{red!10}\color{red}(H)&\cellcolor{red!10}\color{red}(H)&\cellcolor{blue!10}\color{blue}(D)\\\hline
\cellcolor{red!25}$O^1$ &\cellcolor{red!10}\color{red}(E)&\cellcolor{red!10}\color{red}(F)&\cellcolor{red!10}\color{red}(G)&\cellcolor{red!10}\color{red}(H)&\cellcolor{red!10}\color{red}(H)&\cellcolor{red!10}\color{red}(I)\\\hline
\cellcolor{red!25}$I$ &\cellcolor{red!10}\color{red}(F)&\cellcolor{red!10}\color{red}(J)&\cellcolor{red!10}\color{red}(K)&\cellcolor{red!10}\color{red}(L)&\cellcolor{blue!10}\color{blue}(C)&\cellcolor{blue!10}\color{blue}(C)\\\hline
\end{tabular}
\end{center}
\end{figure}
\subsection{Change of the potential}\label{delta9}

Let $G$ be a host graph with a set of capital edges $T,$ $G'$ with a set of capital edges $T'$ be a graph $G$ with an added colored edge $uv.$ Let us consider the change in potential after one round, depending on the color of the edge painted by Painter. We start with assuming that Builder selected edge $uv$ and Painter colored it blue. 

When the added edge $uv$ is blue, then the set of capital edges $T'$ for $G'$ is not changed, so $T'=T.$ Let $$\Delta c_T(v)=c_{T',G'}(v)-c_{T,G}(v).$$
Since the added edge $uv$ is blue, $\Delta c_T(x)\neq 0$ holds only for $x\in \{u,v\}$.  If $x\in V^0(G)$ or $x\in V^1(G)$ or $x\in V^{2+}(G)$ then $x\in V^1(G'),$ $x\in V^{2+}(G'),$ $x\in V^{2+}(G')$ respectively (look at the blue arrows in the table below).    

		\begin{center}
			\begin{tabular}{|r||*{7}{c|}}\hline
				\backslashbox{\textcolor{blue}{blue}}{\textcolor{red}{red}}
				&$\cellcolor{red!25}O$&\cellcolor{red!25}$I$&\cellcolor{red!25}$L,F_0$
				&$\cellcolor{red!25}F_1$&\cellcolor{red!25}$F_2$&\cellcolor{red!25}$F_3$&\cellcolor{red!25}$F_{4+}$\\\hline\hline
				\cellcolor{blue!25}0 &0\tikzmark{1a}&6\tikzmark{2a}&8\tikzmark{3a}&10\tikzmark{4a}&12\tikzmark{5a}&20\tikzmark{6a}&$\tikzmark{7a}\infty$\\\hline
				\cellcolor{blue!25}1 &8\tikzmark{1b}&13\tikzmark{2b}&14\tikzmark{3b}&15\tikzmark{4b}&16\tikzmark{5b}&20\tikzmark{6b}&$\infty\tikzmark{7b}$\\\hline
				\cellcolor{blue!25}2+ &20\tikzmark{1c}&20\tikzmark{2c}&20\tikzmark{3c}&20\tikzmark{4c}&20\tikzmark{5c}&20\tikzmark{6c}&$\infty\tikzmark{7c}$\\\hline
			\end{tabular}
            \begin{tikzpicture}[overlay, remember picture, shorten >=.5pt, shorten <=.5pt, transform canvas={yshift=.25\baselineskip}, color=blue,line width=0.4mm]
				\draw [->] ({pic cs:1a}) [bend left] to ({pic cs:1b}) ;
				\draw [->] ({pic cs:1b}) [bend left] to ({pic cs:1c});
				\draw [->] ({pic cs:2a}) [bend left] to ({pic cs:2b});
				\draw [->] ({pic cs:2b}) [bend left] to ({pic cs:2c});
				\draw [->] ({pic cs:3a}) [bend left] to ({pic cs:3b});
				\draw [->] ({pic cs:3b}) [bend left] to ({pic cs:3c});
				\draw [->] ({pic cs:4a}) [bend left] to ({pic cs:4b});
				\draw [->] ({pic cs:4b}) [bend left] to ({pic cs:4c});
				\draw [->] ({pic cs:5a}) [bend left] to ({pic cs:5b});
				\draw [->] ({pic cs:5b}) [bend left] to ({pic cs:5c});
				\draw [->] ({pic cs:6a}) [bend left] to ({pic cs:6b});
				\draw [->] ({pic cs:6b}) [bend left] to ({pic cs:6c});                
				\end{tikzpicture}
		\end{center}
Now we check case~\textcolor{blue}{(D)}. This means that $u\in O^0(G),$ so $u\in O^1(G')$ and 
$\max\Delta c_T(v)=8-0=8.$ We also know that $u\in F_2,$ so
$$\max\Delta c_T(u)=\max\{\Delta c_T(u): u\in F_2(G)\}=\max\{16-12,20-16,20-20\}=4.$$ From this we obtain that the difference of potential $\Delta f$ (i.e. $f(G',T')-f(G,T)$) is at most 12 in case~\textcolor{blue}{(D)} (look at the blue table).

Analogously, by checking cases \textcolor{blue}{(A)}, \textcolor{blue}{(B)} and \textcolor{blue}{(C)}, we get $\Delta f\le 12$ for every Painter's ``blue move".
\begin{center}

\begin{tabular}{|*{6}{c|}}\hline

\cellcolor{blue!25}case&\cellcolor{blue!25}$u$&\cellcolor{blue!25}$v$&\cellcolor{blue!25} $\max \Delta c_T(u)$ &\cellcolor{blue!25}$\max \Delta c_T(v)$ &\cellcolor{blue!25} $\max\Delta f$ \\\hline\hline
\rowcolor{blue!5}{\color{blue}(A)} &$V^{2+}\cup F_3$&any&0&12&12\\\hline
\rowcolor{blue!5}{\color{blue}(B)} &$L\cup F$&$L\cup F$&6&6&12\\\hline
\rowcolor{blue!5}{\color{blue}(C)} &$I$&$F_1\cup F_2$&7&5&12\\\hline
\rowcolor{blue!5}{\color{blue}(D)} &$O^0$&$F_2$&8&4&12\\\hline
\end{tabular}
\end{center}
Now we check cases, when Painter colors $uv$ red. 
This situation is a bit more complicated -- $c_T(x)$ may change not only for $u,v$, but for all vertices in the red components of $u$ and $v$. However, vertices in big red components are not affected by the new red edge. In the following red table, we use a notation, showing ``worst-case scenario'' of each case, i.e. the subcase where the increase of function $f$ is the biggest. For example, $O,2I\to 3L$ means that one vertex from $O(G)$ and two vertices from $I(G)$ will move to the set $L(G')$ in the worst-case scenario.

It is also worth noting one property of the function $c_T$. It is not necessary in the proof, but it helps in reducing the number of cases. If a vertex $u$ moves from $X^i(G)$ to $Z^i(G')$ and a vertex $v$ moves from $X^j(G)$ to $Z^j(G')$ after adding a red edge, then $\Delta c_T(u)\ge \Delta c_T(v)$, for $i<j$. It means that the worst-case scenario for Painter (i.e. $\Delta f$ is the biggest) is when the blue degree of all vertices in the red components of $u$ and $v$ are the smallest possible. In cases $\color{red}(E)$--$\color{red}(L)$ we may assume that all those vertices are not incident to blue edges besides case \textcolor{red}{(I)}, where $v\in O^1(G)$ have one blue edge.

\begin{center}
\begin{tabular}{|c|c|c|c|c|p{59.0mm}|c|}\hline

\cellcolor{red!25}case&\cellcolor{red!25}$u$&\cellcolor{red!25}$v$&\cellcolor{red!25} \parbox[c]{2cm}{edge \\added to $T$} &\cellcolor{red!25}worst-case scenario & \multicolumn{2}{c|}{\cellcolor{red!25}$\max\Delta f$} \\\hline\hline
\rowcolor{red!5}{\color{red}(E)} &$O$&$O$&&$2O\to 2I$&$-(2\cdot 0)+(2\cdot 6)$&12\\\hline
\rowcolor{red!5}{\color{red}(F)} &$O$&$I$&&$O,2I\to 3L$&$-(0+2\cdot 6)+(3\cdot 8$)&12\\\hline
\rowcolor{red!5}{\color{red}(G)} &$O$&$L$&$vw$&$O,3L\to 2F_0,2F_1$&$-(0+3\cdot 8)+(2\cdot 8+2\cdot 10)$&12\\\hline
\rowcolor{red!5}{\color{red}(H)} &$O$&$F_0\cup F_1$&&$O\to F_2$&$-(0)+(12)$&12\\\hline
\rowcolor{red!5}{\color{red}(I)} &$O^1$&$F_2$&&$O^1\to F_3^1$&$-(8)+(12)$&12\\\hline
\rowcolor{red!5}{\color{red}(J)} &$I$&$I$&$uv$&$4I\to 2F_0,2F_1 $&$-(4\cdot 6)+(2\cdot 8+2\cdot 10)$&12\\\hline
\rowcolor{red!5}{\color{red}(K)} &$I$&$L$&$uv$&$2I,3L\to 2F_0,2F_1,F_2$&$-(2\cdot 6+3\cdot 8)+(2\cdot 8+2\cdot 10+12)$&12\\\hline
\rowcolor{red!5}{\color{red}(L)} &$I$&$F_0$&&$2I\to F_1,F_2$&$-(2\cdot 6)+(10+12)$&10\\\hline
\end{tabular}
\end{center}

Looking at the table, we see that the potential does not increase by more than 12 when the edge $uv$ is red.

Summarizing, Painter can color the edge $uv$ such that 
$$\Delta f=f(G',T')-f(G,T)\le 12.$$
\subsection{Proof of Lemma \ref{lemma} (3)}
In section \ref{9strategy} we presented Painter's strategy and showed that it is well-defined. From the definition of the potential, we know that $f(G,T)=\infty$ if $G$ contains a red cycle or $F_{4+},$ and thus, in particular, if it contains a red path $P_9.$ In section \ref{delta9} we obtained that the potential does not change by more than 12 after one round, thus a red cycle and a red path $P_9$ will never appear in this game. To prove that
$$\rr(\textcolor{red}{\cC\cup \{P_9\}},\textcolor{blue}{\cH(5/3,-1,x)})\ge x,$$
it remains to show that no graph from $\cH(5/3,-1,x)$ will appear sooner than after $x$ moves.

Let $H\in \cH(5/3,-1,x).$ It means that $5 v(H)/3- v_1(H)\ge x.$
For every graph $K,$ which contains a blue $H$ we have (from section \ref{9potential}) $$f(K,T)\ge 20 |V^{2+}(K)|+8|V^1(K)|=20 v(H)-12 v_1(H)\ge 12x.$$
Of course $f(\emptyset)=0.$ Furthermore, the potential does not change by more than 12 after one round. Using Observation \ref{obs} we conclude that $H$ will appear no sooner than after $x$ moves.


\section[P7-game]{$P_7$-game}
The proof of the Lemma \ref{lemma} for $P_7$-game is very similar to that for $P_9$-game. Therefore, we will omit most of the details. Based on the idea from the previous section and using the tables below for $P_7$-game, one can quickly reconstruct the proof.
\subsection{Potential function}\label{7potential}
Let $T$ be a set of capital edges in $G$ and $c_T:V(G)\to \R$ be defined by the following table, where $L_0$ is a set of central vertices in red $P_3$ paths and $L_1=L\setminus L_0$.
\begin{center}
\begin{tabular}{|r||*{7}{c|}}\hline
\backslashbox{\textcolor{blue}{blue}}{\textcolor{red}{red}}
&\cellcolor{red!25}$O$&\cellcolor{red!25}$I$&\cellcolor{red!25}$L_0$&\cellcolor{red!25}{$L_1,F_0$}
&\cellcolor{red!25}$F_1$&\cellcolor{red!25}$F_2$&\cellcolor{red!25}$F_{3+}$\\\hline\hline
\cellcolor{blue!25}0 &0&10&12&14&16&32&$\infty$\\\hline
\cellcolor{blue!25}1 &12&21&22&23&24&32&$\infty$\\\hline
\cellcolor{blue!25}2+ &32&32&32&32&32&32&$\infty$\\\hline
\end{tabular}
\end{center}

Let $f(G,T)=\infty$ if $G$ has a red cycle or the set $F_{3+}$ is nonempty. Otherwise, let
$$f(G,T)=\sum_{v\in V(G)} c_T(v).$$

\subsection{Painter's strategy}\label{7strategy}

In the following four cases Painter will color the edge $uv$ blue.

\begin{enumerate}[{\color{blue} (A)}]
    \item $u\in V^{2+}\cup F_2$;
    \item $u,v\in L\cup F$;
    \item $u\in I, v\in L_1\cup F$;
    \item $u\in O^0, v\in F_1.$
\end{enumerate}

Note that point $\color{blue} (B)$ guarantees that no red cycle will ever appear.

In the remaining cases she colors the edge red. In three cases a new big red component will appear, so we have to add an edge to $T$. 

\begin{enumerate}[{\color{red}(A)}]
\setcounter{enumi}{4}
\item $u,v\in O$;
\item $u\in O, v\in I$;
\item $u\in O, v\in L$, $\to$ add $vw$ to $T$, where $w\neq u$ and $vw$ is red;
\item $u\in O, v\in F_0$;
\item $u\in O^1, v\in F_1$;
\item $u,v\in I$ $\to$ add $uv$ to $T$;
\item $u\in I,v\in L_0$ $\to$ add $uv$ to $T$.
\end{enumerate}

\begin{figure}[h]
\begin{center}
\begin{tabular}{|r||*{9}{c|}}\hline
\backslashbox{$u$}{$v$}
&\cellcolor{red!25}$O$&\cellcolor{red!25}$I$&\cellcolor{red!25}$L_0$&\cellcolor{red!25}{$L_1$}
&\cellcolor{red!25}$F_0$&\cellcolor{red!25}$F_1$\\\hline\hline
\cellcolor{red!25}$O^0$ &\cellcolor{red!10}\color{red}(E)&\cellcolor{red!10}\color{red}(F)&\cellcolor{red!10}\color{red}(G)&\cellcolor{red!10}\color{red}(G)&\cellcolor{red!10}\color{red}(H)&\cellcolor{blue!10}\color{blue}(D)\\\hline
\cellcolor{red!25}$O^1$ &\cellcolor{red!10}\color{red}(E)&\cellcolor{red!10}\color{red}(F)&\cellcolor{red!10}\color{red}(G)&\cellcolor{red!10}\color{red}(G)&\cellcolor{red!10}\color{red}(H)&\cellcolor{red!10}\color{red}(I)\\\hline
\cellcolor{red!25}$I$ &\cellcolor{red!10}\color{red}(F)&\cellcolor{red!10}\color{red}(J)&\cellcolor{red!10}\color{red}(K)&\cellcolor{blue!10}\color{blue}(C)&\cellcolor{blue!10}\color{blue}(C)&\cellcolor{blue!10}\color{blue}(C)\\\hline
\end{tabular}
\caption{This table shows all combinations of types of vertices $u$ and $v$ which are not considered by cases {\color{blue}(A)} and {\color{blue}(B)}.}
\end{center}
\end{figure}

\subsection{Change of potential}\label{delta7}
Let $c_T:V(G)\to \R$ be a function defined by the following table.
\begin{center}
\begin{tabular}{|r||*{7}{c|}}\hline
\backslashbox{\textcolor{blue}{blue}}{\textcolor{red}{red}}
&\cellcolor{red!25}$O$&\cellcolor{red!25}$I$&\cellcolor{red!25}$L_0$&\cellcolor{red!25}{$L_1,F_0$}
&\cellcolor{red!25}$F_1$&\cellcolor{red!25}$F_2$&\cellcolor{red!25}$F_{3+}$\\\hline\hline
\cellcolor{blue!25}0 &0&10&12&14&16&32&$\infty$\\\hline
\cellcolor{blue!25}1 &12&21&22&23&24&32&$\infty$\\\hline
\cellcolor{blue!25}2+ &32&32&32&32&32&32&$\infty$\\\hline
\end{tabular}
\end{center}

We carry out the proof analogously to the reasoning from section \ref{delta9}, using the two tables below.
    \begin{center}
\begin{tabular}{|*{6}{c|}}\hline

\cellcolor{blue!25}case&\cellcolor{blue!25}$u$&\cellcolor{blue!25}$v$&\cellcolor{blue!25} $\max \Delta c_T(u)$ &\cellcolor{blue!25}$\max \Delta c_T(v)$ &\cellcolor{blue!25} $\max\Delta f$ \\\hline\hline
\rowcolor{blue!5}{\color{blue}(A)} &$V^{2+}\cup F_2$&any&0&20&20\\\hline
\rowcolor{blue!5}{\color{blue}(B)} &$L\cup F$&$L\cup F$&10&10&20\\\hline
\rowcolor{blue!5}{\color{blue}(C)} &$I$&$ L_1\cup F$&11&9&20\\\hline
\rowcolor{blue!5}{\color{blue}(D)} &$O^0$&$F_1$&12&8&20\\\hline
\end{tabular}
\end{center}

\begin{center}
\begin{tabular}{|*{7}{c|}}\hline

\cellcolor{red!25}case&\cellcolor{red!25}$u$&\cellcolor{red!25}$v$&\cellcolor{red!25} \parbox[c]{2cm}{edge \\added to $T$} &\cellcolor{red!25}worst-case scenario & \multicolumn{2}{c|}{\cellcolor{red!25}$\max\Delta f$} \\\hline\hline
\rowcolor{red!5}{\color{red}(E)} &$O$&$O$&&$2O\to 2I$&$-(2\cdot 0)+(2\cdot 10)$&20\\\hline
\rowcolor{red!5}{\color{red}(F)} &$O$&$I$&&$O,2I\to L_0,2L_1$&$-(0+2\cdot 10)+(12+2\cdot 14)$&20\\\hline
\rowcolor{red!5}{\color{red}(G)} &$O$&$L$&$vw$&$O,L_0,2L_1\to 2F_0,2F_1$&$-(0+12+2\cdot 14)+(2\cdot 14+2\cdot 16)$&20\\\hline
\rowcolor{red!5}{\color{red}(H)} &$O$&$F_0$&&$O\to F_1$&$-(0)+(16)$&16\\\hline
\rowcolor{red!5}{\color{red}(I)} &$O^1$&$F_1$&&$O^1\to F_2^1$&$-(12)+(32)$&20\\\hline
\rowcolor{red!5}{\color{red}(J)} &$I$&$I$&$uv$&$4I\to 2F_0,2F_1 $&$-(4\cdot 10)+(2\cdot 14+2\cdot 16)$&20\\\hline
\rowcolor{red!5}{\color{red}(K)} &$I$&$L_0$&$uv$&$2I,L_0,2L_1\to 2F_0,3F_1$&$-(2\cdot 10+12+2\cdot 14)+(2\cdot 14+3\cdot 16)$&16\\\hline

\end{tabular}
\end{center}
Summarizing, Painter can color the edge $uv$ such that 
$$\Delta f=f(G',T')-f(G,T)\le 20.$$

\subsection{Proof of Lemma \ref{lemma} (1)}
In section \ref{7strategy} we presented Painter's strategy and showed that it is well-defined. From the definition of the potential, we know that $f(G,T)=\infty$ if $G$ contains a red cycle or $F_{3+},$ and thus, in particular, it contains a red path $P_7.$ In section \ref{delta7} we obtained that the potential does not change by more than 20 after one round, thus a red cycle and a red path $P_7$ will never appear in this game. To prove that
$$\rr(\textcolor{red}{\cC\cup \{P_7\}},\textcolor{blue}{\cH(8/5,-1,x)})\ge x.$$
it remains to show that no graph from $\cH(8/3,-1,x)$ will appear sooner than after $x$ moves.

Let $H\in \cH(8/3,-1,x).$ It means that $8 v(H)/3- v_1(H)\ge x.$
For every graph $K,$ which contains a blue $H$ we have (from section \ref{7potential}) $$f(K,T)\ge 32 |V^{2+}(K)|+12|V^1(K)|=32 v(H)-20 v_1(H)\ge 20x.$$
Of course $f(\emptyset)=0.$ Furthermore, the potential does not change by more than 20 after one round. Using Observation \ref{obs} we conclude that $H$ will appear no sooner than after $x$ moves.

\section[P8-game]{$P_8$-game}
The idea of the proof of the Lemma \ref{lemma} for $P_8$-game is similar to that for $P_9$-game. The main difference is that instead of capital edges, we have capital vertices.

\subsection{Potential function}\label{8potential}
Let $T$ be a set of capital vertices in $G$ and $c_T:V(G)\to \R$ be defined by the following table.
\begin{center}
\begin{tabular}{|r||*{11}{c|}}\hline
\backslashbox{\textcolor{blue}{blue}}{\textcolor{red}{red}}
&\cellcolor{red!25}$O$&\cellcolor{red!25}$I$&\cellcolor{red!25}$L_0$&\cellcolor{red!25}$L_1$
&\cellcolor{red!25}$N_0$&\cellcolor{red!25}$N_1$&\cellcolor{red!25}$F_0$&\cellcolor{red!25}$F_1$&\cellcolor{red!25}$F_2$&\cellcolor{red!25}$F_3$&\cellcolor{red!25}$F_{4+}$\\\hline\hline
\cellcolor{blue!25}0 & 0 & 22 & 28 & 30 & 32 & 34 & 28 & 34 & 40 & 72&$\infty$ \\\hline
\cellcolor{blue!25}1 & 28 & 47 & 50 & 52 & 54 & 53 & 50 & 53 & 56 & 72&$\infty$ \\\hline
\cellcolor{blue!25}2+ & 72 & 72 & 72 & 74 & 76 & 72 & 72 & 72 & 72 & 72&$\infty$ \\\hline
\end{tabular}
\end{center}
We can simplify the representation of this table.
\begin{center}
\begin{tabular}{|r||*{11}{c|}}\hline
\backslashbox{\textcolor{blue}{blue}}{\textcolor{red}{red}}
&\cellcolor{red!25}$O$&\cellcolor{red!25}$I$&\cellcolor{red!25}$L_0,F_0$&\cellcolor{red!25}$L_1$
&\cellcolor{red!25}$N_0$&\cellcolor{red!25}$N_1,F_1$&\cellcolor{red!25}$F_2$&\cellcolor{red!25}$F_3$&\cellcolor{red!25}$F_{4+}$\\\hline\hline
\cellcolor{blue!25}0 & 0 & 22 & 28 & 30 & 32 & 34 & 40 & 72&$\infty$ \\\hline
\cellcolor{blue!25}1 & 28 & 47 & 50 & 52 & 54 & 53 & 56 & 72&$\infty$ \\\hline
\cellcolor{blue!25}2+ & 72 & 72 & 72 & 74 & 76 & 72 & 72 & 72&$\infty$ \\\hline
\end{tabular}
\end{center}

Let $f(G,T)=\infty$ if $G$ has a red cycle or the set $F_{4+}$ is nonempty. Otherwise, let
$$f(G,T)=\sum_{v\in V(G)} c_T(v).$$


\subsection{Painter's strategy}\label{8strategy}

In the following four cases Painter will color the edge $uv$ blue.

\begin{enumerate}[{\color{blue} (A)}]
    \item $u\in V^{2+}\cup F_2$;
    \item $u,v\in L\cup N\cup F$;
    \item $u\in I, v\in N_1\cup F_1\cup F_2\cup F_3$;
    \item $u\in O^0, v\in F_2$.
\end{enumerate}

Note that point $\color{blue} (B)$ guarantees that no red cycle will ever appear.

In the remaining cases she colors the edge red. In five cases a new big red component will appear, so we have to add a vertex to $T$. 

\begin{enumerate}[{\color{red}(A)}]
\setcounter{enumi}{4}
\item $u,v\in O$;
\item $u\in O, v\in I$;
\item $u\in O, v\in L_0$, $\to$ add $v$ to $T$;
\item $u\in O, v\in L_1$;
\item $u\in O, v\in N_0$, $\to$ add $v$ to $T$;
\item $u\in O, v\in N_1$, $\to$ add $w$ to $T$, where $w\neq u$ and $vw$ is red;
\item $u\in O, v\in F_0\cup F_1$;
\item $u\in O^1, v\in F_2$;
\item $u,v\in I$;
\item $u\in I,v\in L$ $\to$ add $v$ to $T$;
\item $u\in I,v\in N_0$ $\to$ add $v$ to $T$;
\item $u\in I,v\in F_0$.
\end{enumerate}

\begin{figure}[h]
\begin{center}
\begin{tabular}{|r||*{9}{c|}}\hline
\backslashbox{$u$}{$v$}
&\cellcolor{red!25}$O$&\cellcolor{red!25}$I$&\cellcolor{red!25}$L_0$&\cellcolor{red!25}$L_1$
&\cellcolor{red!25}$N_0$&\cellcolor{red!25}$N_1$&\cellcolor{red!25}$F_0$&\cellcolor{red!25}$F_1$&\cellcolor{red!25}$F_2$\\\hline\hline
\cellcolor{red!25}$O^0$ &\cellcolor{red!10}\color{red}(E)&\cellcolor{red!10}\color{red}(F)&\cellcolor{red!10}\color{red}(G)&\cellcolor{red!10}\color{red}(H)&\cellcolor{red!10}\color{red}(I)&\cellcolor{red!10}\color{red}(J) & \cellcolor{red!10}\color{red}(K) &\cellcolor{red!10}\color{red}(K) & \cellcolor{blue!10}\color{blue}(D)\\\hline

\cellcolor{red!25}$O^1$ &\cellcolor{red!10}\color{red}(E)&\cellcolor{red!10}\color{red}(F)&\cellcolor{red!10}\color{red}(G)&\cellcolor{red!10}\color{red}(H)&\cellcolor{red!10}\color{red}(I)&\cellcolor{red!10}\color{red}(J) & \cellcolor{red!10}\color{red}(K) &\cellcolor{red!10}\color{red}(K) & \cellcolor{red!10}\color{red}(L)\\\hline

\cellcolor{red!25}$I$ &\cellcolor{red!10}\color{red}(F)&\cellcolor{red!10}\color{red}(M)&\cellcolor{red!10}\color{red}(N)&\cellcolor{red!10}\color{red}(N)&\cellcolor{red!10}\color{red}(O)&\cellcolor{blue!10}\color{blue}(C)&\cellcolor{red!10}\color{red}(P)&\cellcolor{blue!10}\color{blue}(C)&\cellcolor{blue!10}\color{blue}(C)\\\hline
\end{tabular}
\caption{This table shows all combinations of types of vertices $u$ and $v$ which are not considered by cases {\color{blue}(A)} and {\color{blue}(B)}.}
\end{center}
\end{figure}
\subsection{Change of potential}\label{delta8}
Let $c_T:V(G)\to \R$ be a function defined by the following table.
\begin{center}
\begin{tabular}{|r||*{11}{c|}}\hline
\backslashbox{\textcolor{blue}{blue}}{\textcolor{red}{red}}
&\cellcolor{red!25}$O$&\cellcolor{red!25}$I$&\cellcolor{red!25}$L_0,F_0$&\cellcolor{red!25}$L_1$
&\cellcolor{red!25}$N_0$&\cellcolor{red!25}$N_1,F_1$&\cellcolor{red!25}$F_2$&\cellcolor{red!25}$F_3$&\cellcolor{red!25}$F_{4+}$\\\hline\hline
\cellcolor{blue!25}0 & 0 & 22 & 28 & 30 & 32 & 34 & 40 & 72&$\infty$ \\\hline
\cellcolor{blue!25}1 & 28 & 47 & 50 & 52 & 54 & 53 & 56 & 72&$\infty$ \\\hline
\cellcolor{blue!25}2+ & 72 & 72 & 72 & 74 & 76 & 72 & 72 & 72&$\infty$ \\\hline
\end{tabular}
\end{center}
We carry out the proof analogously to the reasoning from section \ref{delta9}, using the two tables below.
    \begin{center}
\begin{tabular}{|*{6}{c|}}\hline

\cellcolor{blue!25}case&\cellcolor{blue!25}$u$&\cellcolor{blue!25}$v$&\cellcolor{blue!25} $\max \Delta c_T(u)$ &\cellcolor{blue!25}$\max \Delta c_T(v)$ &\cellcolor{blue!25} $\max\Delta f$ \\\hline\hline
\rowcolor{blue!5}{\color{blue}(A)} &$V^{2+}\cup F_2$&any&0&44&44\\\hline
\rowcolor{blue!5}{\color{blue}(B)} &$L\cup N\cup F$&$L\cup N\cup F$&22&22&44\\\hline
\rowcolor{blue!5}{\color{blue}(C)} &$I$&$ N_1\cup F_1\cup F_2\cup F_3$&25&19&44\\\hline
\rowcolor{blue!5}{\color{blue}(D)} &$O^0$&$F_2$&28&16&44\\\hline
\end{tabular}
\end{center}

\begin{center}
\begin{tabular}{|c|c|c|c|c|p{40.3mm}|c|}\hline

\cellcolor{red!25}case&\cellcolor{red!25}$u$&\cellcolor{red!25}$v$&\cellcolor{red!25} \parbox[c]{2cm}{vertex \\added to $T$} &\cellcolor{red!25}worst-case scenario & \multicolumn{2}{c|}{\cellcolor{red!25}$\max\Delta f$} \\\hline\hline
\rowcolor{red!5}{\color{red}(E)} &$O$&$O$&&$2O\to 2I$&$-(2\cdot 0)+(2\cdot 22)$&44\\\hline
\rowcolor{red!5}{\color{red}(F)} &$O$&$I$&&$O,2I\to L_0,2L_1$&$-(0+2\cdot 22)+(28+2\cdot 30)$&44\\\hline
\rowcolor{red!5}{\color{red}(G)} &$O$&$L_0$&$v$&$O,L_0,2L_1\to F_0,3F_1$&$-(0+28+2\cdot 30)+\textcolor{white}{0000000}+(28+3\cdot 34)$&42\\\hline
\rowcolor{red!5}{\color{red}(H)} &$O$&$L_1$&&$O,L_0,2L_1\to 2N_0,2N_1$&$-(0+28+2\cdot 30)+\textcolor{white}{0000}+(2\cdot 32+2\cdot 34)
$&44\\\hline

\rowcolor{red!5}{\color{red}(I)} &$O$&$N_0$&$v$&$O,2N_0,2N_1\to F_0,3F_1,F_2$&$-(0+2\cdot 32+2\cdot 34)+\textcolor{white}{000}+(28+3\cdot 34+40)$&38\\\hline
\rowcolor{red!5}{\color{red}(J)} &$O$&$N_1$&$w$&$O,2N_0,2N_1\to F_0,2F_1,2F_2$&$-(0+2\cdot 32+2\cdot 34)+\textcolor{white}{0}+(28+2\cdot 34+2\cdot 40)$&44\\\hline

\rowcolor{red!5}{\color{red}(K)} &$O$&$F_0\cup F_1$&&$O\to F_2$&$-(0)+(40)$&40\\\hline
\rowcolor{red!5}{\color{red}(L)} &$O^1$&$F_2$&&$O^1\to F_3^1$&$-(28)+(72)$&44\\\hline
\rowcolor{red!5}{\color{red}(M)} &$I$&$I$&&$4I\to 2N_0,2N_1 $&$-(4\cdot 22)+(2\cdot 32+2\cdot 34)$&44\\\hline
\rowcolor{red!5}{\color{red}(N)} &$I$&$L$&$v$&$2I,L_0,2L_1\to F_0,2F_1,2F_2$&$-(2\cdot 22 +28+2\cdot 30)+\textcolor{white}{0}+(28+2\cdot 34+2\cdot 40)$&44\\\hline

\rowcolor{red!5}{\color{red}(O)} &$I$&$N_0$&$v$&$2I,2N_0,2N_1\to F_0,3F_1,2F_2$&$-(2\cdot 22+2\cdot 32+2\cdot 34)+\textcolor{white}{0}+(28+3\cdot 34+2\cdot 40)$&34\\\hline
\rowcolor{red!5}{\color{red}(P)} &$I$&$F_0$&&$2I\to F_1,F_2$&$-(2\cdot 22)+(34+40)$&30\\\hline

\end{tabular}
\end{center}
Summarizing, Painter can color the edge $uv$ such that 
$$\Delta f=f(G',T')-f(G,T)\le 44.$$
\subsection{Proof of Lemma \ref{lemma} (2)}
In section \ref{8strategy} we presented Painter's strategy and showed that it is well-defined. From the definition of the potential, we know that $f(G,T)=\infty$ if $G$ contains a red cycle or $F_{4+},$ and thus, in particular, it contains a red path $P_8.$ In section \ref{delta8} we obtained that the potential does not change by more than 44 after one round, thus a red cycle and a red path $P_8$ will never appear in this game. To prove that
$$\rr(\textcolor{red}{\cC\cup \{P_8\}},\textcolor{blue}{\cH(18/11,-1,x)})\ge x.$$
it remains to show that no graph from $\cH(18/11,-1,x)$ will appear sooner than after $x$ moves.

Let $H\in \cH(18/11,-1,x).$ It means that $18 v(H)/11- v_1(H)\ge x.$
For every graph $K,$ which contains a blue $H$ we have (from section \ref{8potential}) $$f(K,T)\ge 72 |V^{2+}(K)|+28|V^1(K)|=72 v(H)-44 v_1(H)\ge 44x.$$
Of course $f(\emptyset)=0.$ Furthermore, the potential does not change by more than 44 after one round. Using Observation \ref{obs} we conclude that $H$ will appear no sooner than after $x$ moves.

\section{Proof of Theorem \ref{pkpcn}}

Let us take a graph $G$ without isolated vertices and $x= 8v(G)/5-v_1(G).$ From lemma~\ref{lemma} we have
$$\rr(\textcolor{red}{\cC\cup \{P_7\}},\textcolor{blue}{\cH(8/5,-1, 8v(G)/5-v_1(G))})\ge  8v(G)/5-v_1(G).$$
Of course $G\in \cH(8/5,-1, 8v(G)/5-v_1(G)),$ so
$$\rr(\textcolor{red}{P_7},\textcolor{blue}{G})\ge 8v(G)/5-v_1(G).$$
Analogously we get the remaining two inequalities of the Theorem \ref{pkpcn}.

\section {Existence of the limit of $\rr(H,P_n)/n$ for bipartite $H$}
In the game $\RR(\textcolor{red}{\cH_1},\textcolor{blue}{\cH_2})$ we assume that the host graph at the beginning of the game is empty. However, sometimes it is useful to consider a game when it is not empty. For a starting host graph $G$ let us denote this modified game as $\RR_G(\textcolor{red}{\cH_1},\textcolor{blue}{\cH_2})$ and let $\rr_G(\textcolor{red}{\cH_1},\textcolor{blue}{\cH_2})$ be the number of rounds of the game when players play optimally.

\begin{lemma}
    Let $G$ be a blue graph consisting of two disjoint paths $P_n$ and $P_m$ and $H$ be a bipartite graph. Then $\rr_G(\textcolor{red}{H},\textcolor{blue}{P_{n+m-v(H)}})\le e(H)$.
\end{lemma}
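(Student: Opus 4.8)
I want to show that starting from a host graph $G$ that is already colored blue and consists of two vertex-disjoint blue paths $P_n$ and $P_m$, Builder can force a red copy of the bipartite graph $H$ within $e(H)$ moves, unless a blue $P_{n+m-v(H)}$ appears first. The key idea is that Builder will try to build $H$ using $e(H)$ edges, one per edge of $H$, so if every edge Builder proposes gets colored red, the red copy of $H$ is completed in exactly $e(H)$ moves. The only way Painter avoids this is by coloring some proposed edge blue — and the strategy must arrange that each such blue edge is forced to create a long blue path. So the real content is: Builder should propose the edges of $H$ in an order, and choose which board vertices to embed the vertices of $H$ into, so that every blue answer extends the blue structure toward a $P_{n+m-v(H)}$.

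\textbf{Construction.} Fix a proper $2$-coloring of $V(H)$ with parts $A$ and $B$ (this is where bipartiteness of $H$ is used). Label the two blue paths as $a_1 a_2 \cdots a_n$ and $b_1 b_2 \cdots b_m$. Builder will maintain a partial embedding $\varphi$ of $H$ into the board. The crucial choice: the vertices of $A$ will be embedded onto an interval of consecutive vertices at the \emph{end} of the first blue path (say $a_n, a_{n-1}, \dots$), and the vertices of $B$ onto an interval at the end of the second blue path (say $b_m, b_{m-1}, \dots$), so that $|A|$ vertices are taken from the first path and $|B|$ from the second, with $|A|+|B| = v(H)$. Now process the edges of $H$ one at a time (in any order). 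For an edge $\{\alpha,\beta\}$ of $H$ with $\alpha \in A$, $\beta \in B$, Builder proposes the board edge $\varphi(\alpha)\varphi(\beta)$. If Painter colors it red, good — this edge of $H$ is realized. If Painter colors it blue, then it joins a vertex near the end of the first blue path to a vertex near the end of the second blue path: I claim this forces a long blue path. Concretely, a blue edge between $a_i$ (with $i$ in the last $|A|$ indices) and $b_j$ (with $j$ in the last $|B|$ indices) together with the blue sub-paths $a_1 \cdots a_i$ and $b_j \cdots b_m$ — or the appropriate orientation — yields a blue path on at least $i + (m-j+1)$ vertices; by choosing which vertex of $A$ (resp. $B$) is embedded where, Builder can guarantee at least one such blue answer is long enough. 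Actually the cleanest way: order the vertices of $A$ as $a_n, a_{n-1}, \dots$ and of $B$ as $b_1, b_2, \dots$; then a blue edge $a_i b_j$ gives the blue path $a_1 \cdots a_i b_j b_{j+1}\cdots b_m$, of length $i + (m - j + 1) \ge (n - |A| + 1) + (m - |B| + 1 - 1)$. Since $|A| + |B| = v(H)$, this is at least $n + m - v(H) + 1$... one needs to track the off-by-ones carefully so that it comes out to exactly $n+m-v(H)$ vertices, i.e.\ a blue $P_{n+m-v(H)}$.

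\textbf{Counting and the obstacle.} Every move Builder makes is either (i) answered red, advancing the construction of $H$ by one edge, or (ii) answered blue, in which case — by the embedding choice — the game ends with a blue $P_{n+m-v(H)}$. In case (ii) the game is over, with at most $e(H)$ moves played. In case (i) throughout, after $e(H)$ moves all edges of $H$ are red, so a red $H$ has appeared; again at most $e(H)$ moves. Either way $\rr_G(\textcolor{red}{H},\textcolor{blue}{P_{n+m-v(H)}}) \le e(H)$. The main obstacle I anticipate is purely bookkeeping: pinning down the exact indices so that \emph{every} possible blue answer — not just some — produces a blue path on \emph{at least} $n+m-v(H)$ vertices, while simultaneously keeping $|A|$ vertices drawn from the first path and $|B|$ from the second with $|A|+|B|=v(H)$. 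This needs $n \ge |A|$ and $m \ge |B|$; if one part is larger than a path can supply, Builder should instead split the parts across the two paths more cleverly (or the statement is vacuous/handled by $n+m-v(H)$ being small). A secondary subtlety is that $H$ might be disconnected or have isolated vertices, but embedding is still fine since we only ever propose edges of $H$; isolated vertices of $H$ just occupy board vertices without constraints. I would also double-check that Builder never needs to re-use a board edge (the $\varphi$-images of distinct edges of $H$ are distinct board edges, which holds since $\varphi$ is injective).
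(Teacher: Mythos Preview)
Your approach is essentially identical to the paper's: embed the two colour classes of $H$ into terminal segments of the two blue paths, play the $e(H)$ edges of $H$ one by one, and observe that any blue response links the two paths into a blue path on at least $(n-|A|+1)+(m-|B|+1)=n+m-v(H)+2$ vertices (your off-by-one worry resolves in your favour, matching the paper's count). The boundary case is handled exactly as you guessed: the paper simply assumes $n,m>v(H)$, since otherwise one of the original blue paths already has at least $n+m-v(H)$ vertices and the game is over before it starts.
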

\begin{proof}
We can assume that $n>v(H)$ and $m>v(H)$, otherwise $G$ already contains a blue path $P_{n+m-v(H)}$. 
Let $a,b$ be such numbers that $H\subset K_{a,b}$ and $a+b=v(H)$. Builder takes the first $a$ vertices from the blue path $P_n$ and the first $b$ vertices from the blue path $P_m$ and in a sequence of rounds chooses edges of a copy of $H$ such that all chosen edges are between the paths $P_n$ and $P_m$. Obviously, if Painter colors all the edges red, the red $H$ will appear. Otherwise, there will be a new blue edge. It will close the path between the unused ends of the original paths $P_n$ and $P_m$. The new path contains all the vertices from the starting paths except of at most $a-1$ vertices from $P_n$ and $b-1$ vertices from $P_m$, so the new blue path have at least $n+m-v(H)+2$ vertices.
\end{proof}

\begin{cor}
Let $H$ be a bipartite graph and $n>v(H)$. Then
    $$\rr(\textcolor{red}{H},\textcolor{blue}{P_m})\le \left\lceil\frac{m}{n-v(H)}\right\rceil(\rr(\textcolor{red}{H},\textcolor{blue}{P_n})+e(H)).$$
\end{cor}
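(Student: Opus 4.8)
The plan is to let Builder win $\RR(\textcolor{red}{H},\textcolor{blue}{P_m})$ by repeatedly creating blue copies of $P_n$ and fusing them together, each fusion being an instance of the construction in the proof of the previous lemma. Put $k=\lceil m/(n-v(H))\rceil$ (well defined since $n>v(H)$) and fix $a,b$ with $H\subseteq K_{a,b}$ and $a+b=v(H)$. Builder works in $k$ stages. In the first stage Builder follows an optimal strategy for $\RR(\textcolor{red}{H},\textcolor{blue}{P_n})$ on a fresh block of vertices; after at most $\rr(\textcolor{red}{H},\textcolor{blue}{P_n})$ moves either a red $H$ appears and Builder has won, or a blue $P_n$ has been built, which becomes the \emph{current path}. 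In each later stage Builder first builds a new blue $P_n$ on a fresh block of vertices (again in at most $\rr(\textcolor{red}{H},\textcolor{blue}{P_n})$ moves, or wins), and then uses at most $e(H)$ more moves to merge this new $P_n$ into the current path.

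For the merging step, observe that the new $P_n$ sits on vertices never used before, so no edge between it and the current path has been coloured yet. Builder therefore picks the first $a$ vertices at one end of the new $P_n$ and the first $b$ vertices at a ``clean'' end of the current path, and plays, in any order, the $e(H)$ edges of a copy of $H$ inside the corresponding $K_{a,b}$. Exactly as in the proof of the lemma, either Painter colours all of them red, producing a red $H$, or one of them is blue and that blue edge splices the current path and the new $P_n$ into a single blue path that is longer than the previous current path by at least $n-v(H)$. Since $n>v(H)\ge b$, the two endpoints of a blue $P_n$ cannot both lie among its first $b$ vertices, so after the merge the far end of the new $P_n$ is still a clean end of the enlarged current path and can serve as the merge point in the next stage.

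It remains to count. Assuming no red $H$ has appeared, after all $k$ stages the current blue path has at least
$$ n+(k-1)\bigl(n-v(H)\bigr)=v(H)+k\bigl(n-v(H)\bigr)\ \geq\ m $$
vertices, where the last inequality uses $k\geq m/(n-v(H))$; hence a blue $P_m$ is present and the game has ended. The total number of moves Builder made is at most $k\,\rr(\textcolor{red}{H},\textcolor{blue}{P_n})+(k-1)e(H)\leq k\bigl(\rr(\textcolor{red}{H},\textcolor{blue}{P_n})+e(H)\bigr)$, which is the required bound. The one point I would spell out carefully is that the merging construction of the lemma, stated for a board consisting of exactly two disjoint blue paths, is being reused on a board that already carries all the edges from earlier stages; this is harmless precisely because every new $P_n$ is built on fresh vertices and every merge is carried out at a clean end, so the edges Builder plays are genuinely uncoloured and the lemma's short analysis applies verbatim.
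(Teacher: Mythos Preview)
Your proof is correct and follows essentially the same approach as the paper: build $k=\lceil m/(n-v(H))\rceil$ blue copies of $P_n$ and splice them using the merging lemma, losing at most $v(H)$ vertices and spending at most $e(H)$ rounds per merge. The only cosmetic difference is that the paper builds all $k$ blue paths first and then performs the $k-1$ merges, whereas you interleave building and merging; your version has the advantage of making explicit why the lemma (stated for a board consisting of just two blue paths) still applies when the board carries extra edges---namely, each new $P_n$ lives on fresh vertices, so the bipartite edges Builder plays in the merge are genuinely uncoloured.
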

\begin{proof}
    We present a strategy for Builder. He starts the game $\RR(\textcolor{red}{H},\textcolor{blue}{P_m})$ with forcing a red path $P_k$ or $\left\lceil\frac{m}{n-v(H)}\right\rceil$ disjoint blue paths on the $n$ vertices within $\left\lceil\frac{m}{n-v(H)}\right\rceil \rr(\textcolor{red}{H},\textcolor{blue}{P_n})$ rounds. Then he applies the previous lemma $\left\lceil\frac{m}{n-k}\right\rceil-1$ times, each time merging two blue paths into one (or creating a red $H$). By each merge we lose not more than $v(H)$ vertices from the blue paths and it costs at most $e(H)$ rounds, so we end up with blue path with at least $\left\lceil\frac{m}{n-v(H)}\right\rceil (n-v(H))+v(H)\ge m$ vertices obtained within at most $\left\lceil\frac{m}{n-v(H)}\right\rceil(\rr(\textcolor{red}{H},\textcolor{blue}{P_n})+e(H))$ rounds in total.
\end{proof}
\begin{theorem}
    For any bipartite graph $H$, the limit
    $$\lim_{n\to\infty} \frac{\rr(\textcolor{red}{H},\textcolor{blue}{P_n)}}{n}$$
    exists (and it is finite).
\end{theorem}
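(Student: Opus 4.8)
The plan is to deduce the existence of the limit from the Corollary just proved, using the classical subadditivity-type argument (Fekete's lemma in a mildly generalized form). Write $g(n) = \tilde r(\textcolor{red}{H},\textcolor{blue}{P_n})$ for brevity, and recall from the excerpt's introduction that $g(n) = \Theta(n)$ (since $\tilde r \le \hat r(\textcolor{red}{P_k},\textcolor{blue}{P_n}) = \Theta(n)$ and trivially $g(n)\ge n-1$, as any blue $P_n$ has $n-1$ edges). In particular $g(n)/n$ is a bounded sequence, so $L := \liminf_{n\to\infty} g(n)/n$ is a finite nonnegative real number, and it suffices to show $\limsup_{n\to\infty} g(n)/n \le L$.

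First I would fix $\varepsilon > 0$ and pick, using the definition of $L$, a value $n$ (which I may take as large as I like) with $g(n)/(n - v(H)) < L + \varepsilon$; this is possible because $g(n)/n \to$ behaviour along a subsequence forces $g(n)/(n-v(H))$ arbitrarily close to $L$ as well, since $v(H)$ is a fixed constant and $n\to\infty$. Now for arbitrary large $m$, apply the Corollary with this fixed $n$:
\[
g(m) \;\le\; \left\lceil\frac{m}{n-v(H)}\right\rceil\bigl(g(n)+e(H)\bigr).
\]
Dividing by $m$ and using $\lceil t\rceil \le t + 1$,
\[
\frac{g(m)}{m} \;\le\; \left(\frac{1}{n-v(H)} + \frac{1}{m}\right)\bigl(g(n)+e(H)\bigr)
\;=\; \frac{g(n)+e(H)}{n-v(H)} + \frac{g(n)+e(H)}{m}.
\]
Letting $m\to\infty$ with $n$ fixed, the second term vanishes, so
\[
\limsup_{m\to\infty}\frac{g(m)}{m} \;\le\; \frac{g(n)+e(H)}{n-v(H)} \;=\; \frac{g(n)}{n-v(H)} + \frac{e(H)}{n-v(H)} \;<\; L + \varepsilon + \frac{e(H)}{n-v(H)}.
\]
Since $e(H)$ is a fixed constant and $n$ can be taken arbitrarily large (the choice of $n$ witnessing closeness to $L$ can be made along the $\liminf$-subsequence, which is infinite), the term $e(H)/(n-v(H))$ can be made smaller than $\varepsilon$ as well, giving $\limsup_{m\to\infty} g(m)/m \le L + 2\varepsilon$. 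As $\varepsilon>0$ was arbitrary, $\limsup \le L = \liminf$, so the limit exists and equals $L$, which is finite.

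The only genuinely delicate point is making sure the same large $n$ can simultaneously serve two purposes: witness $g(n)/(n-v(H))$ close to $L$ \emph{and} make the error term $e(H)/(n-v(H))$ small. This is not really an obstacle, since the $\liminf$ is attained along an infinite set of indices $n$, so I just choose $n$ in that set with $n > v(H) + e(H)/\varepsilon$; both conditions then hold. A minor bookkeeping remark: the Corollary requires $n > v(H)$, which is automatic for large $n$. Everything else is routine estimation, so I would present it compactly as above rather than belaboring the $\varepsilon$'s.
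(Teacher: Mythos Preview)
Your argument is correct and follows essentially the same route as the paper: both proofs invoke the Corollary and then run a Fekete-type subadditivity argument to pin $\limsup$ down to $\liminf$. Your $\varepsilon$-$n$ presentation is in fact tidier than the paper's, which phrases the same idea via a subsequence $m_l=\omega(n_l)$ and a contradiction. One small correction: your justification that $g(n)=O(n)$ appeals to $\hat r(\textcolor{red}{P_k},\textcolor{blue}{P_n})=\Theta(n)$ from the introduction, but that statement is only for paths, not for a general bipartite $H$. The paper instead obtains finiteness of $L$ directly from the Corollary with $n=v(H)+1$, giving $g(m)/m\le g(v(H)+1)+e(H)$; you should cite that instead.
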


\begin{proof}
   Let $$a=\liminf_{n\to\infty} \frac{\rr(\textcolor{red}{H},\textcolor{blue}{P_n})}{n}=\lim_{l\to\infty} \frac{\rr(\textcolor{red}{H},\textcolor{blue}{P_{n_l}})}{n_l}.$$
Note that from the previous corollary we have (by substituting $n=v(H)+1$) that
$$\frac{\rr(\textcolor{red}{H},\textcolor{blue}{P_m})}{m}\le  \rr(\textcolor{red}{H},\textcolor{blue}{P_{v(H)+1}})+e(H),$$
so $a<\infty$.
   
   From the definition of the subsequence $n_l$ we have $\rr(H,P_{n_l})=a{n_l}+o(n_l)$. Consider a subsequence $(m_l)$ of natural numbers such that $m_l=\omega(n_l)$ (i.e. $n_l=o(m_l)$). Let us recall that $H$ is fixed, so $v(H)$ and $e(H)$ are constant. From the previous corollary we have
   $$\rr(\textcolor{red}{H},\textcolor{blue}{P_{m_l}})\le \left\lceil\frac{m_l}{n_l-v(H)}\right\rceil(\rr(\textcolor{red}{H},\textcolor{blue}{P_{n_l}})+e(H))=\left\lceil\frac{m_l}{n_l-v(H)}\right\rceil(an_l+o(n_l))$$
   $$= \frac{m_l}{n_l-v(H)}(a(n_l-v(H))+o(n_l-v(H)))+O(n_l)=am_l+o(m_l)+O(n_l)=am_l+o(m_l).$$
Suppose that $\limsup_{m\to\infty}\frac{\rr(\textcolor{red}{H},\textcolor{blue}{P_m})}{m}>a.$ Then there is a sequence $(q_k)$ such that $$\lim_{k\to\infty}\frac{\rr(\textcolor{red}{H},\textcolor{blue}{P_{q_k}})}{q_k}>a.$$
and a subsequence $(q_{k_l})=(m_l)$ such that $m_l=\omega(n_l)$, so we have a contradiction.

   Thus $a$ is asymptotically both lower limit and upper bound of $ \frac{\rr(\textcolor{red}{H},\textcolor{blue}{P_m})}{m}$, and hence 
$$\lim_{m\to\infty} \frac{\rr(\textcolor{red}{H},\textcolor{blue}{P_m})}{m}=a.$$
\end{proof}

\section{Remarks and open problems}
\begin{center}
\begin{tabular}{|*{10}{c|}}\hline
\backslashbox{bound}{k} &\cellcolor{red!25} 2 &\cellcolor{red!25} 3 &\cellcolor{red!25} 4 &\cellcolor{red!25} 5 &\cellcolor{red!25} 6 &\cellcolor{red!25} 7 &\cellcolor{red!25} 8 &\cellcolor{red!25} $9^+$  \\\hline
\cellcolor{red!25}lower bound of $\lim_{n\to\infty}  \rr(P_k,P_n)/n$  &  \cellcolor{red!10} &\cellcolor{red!10} &\cellcolor{red!10}  &\cellcolor{red!10} 1.5 & \cellcolor{red!10}1.5 &\cellcolor{red!10} 1.6 & \cellcolor{red!10}1.(63) & \cellcolor{red!10} \\ \cline{1-1} \cline{5-8}
\cellcolor{red!25}upper bound of $\lim_{n\to\infty}  \rr(P_k,P_n)/n$  & \cellcolor{red!10}\multirow{-2}{*}{1} &\cellcolor{red!10}\multirow{-2}{*}{1.25} & \cellcolor{red!10}\multirow{-2}{*}{1.4}& \multicolumn{4}{c|}{\cellcolor{red!10}1.(6)} &\cellcolor{red!10}\multirow{-2}{*}{1.(6)} \\\hline
\end{tabular}
\end{center}

We initially expected that the sequence $(\lim_{n\to\infty}  \rr(\textcolor{red}{P_k},\textcolor{blue}{P_n})/n)_{k=1}^\infty$ is concave, i.e. that the speed of growth slows down, when $k$ increases. We cannot rule out this possibility, however we could not find a better strategy for Painter in $\RR(\textcolor{red}{P_6},\textcolor{blue}{P_n})$ than the one presented in \cite{lo}. From the other side, our method gave us optimal lower bounds of  $\lim_{n\to\infty}  \rr(\textcolor{red}{P_k},\textcolor{blue}{P_n})/n$ for any fixed $k$, for which value of this limit is known (for example, see Exercise \ref{k=3} for $k=3$). This is why we state the following conjecture.

\begin{conj}
    Let 
    $$a_k=\lim_{n\to\infty}  \rr(\textcolor{red}{P_k},\textcolor{blue}{P_n})/n.$$
    Then $a_5=a_6=1.5, a_7=1.6,a_8=1.(63)$. 
\end{conj}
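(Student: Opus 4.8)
Since the matching lower bounds are already available --- $a_5,a_6\ge 1.5$ is due to Cyman, Dzido, Lapinskas and Lo~\cite{lo}, while $a_7\ge 1.6$ and $a_8\ge 1.(63)$ are exactly Corollary~\ref{pkpn} --- the conjecture is equivalent to the four \emph{upper} bounds $a_5\le 1.5$, $a_6\le 1.5$, $a_7\le 1.6$ and $a_8\le 1.(63)$. Each of them amounts to a Builder strategy in $\RR(\textcolor{red}{P_k},\textcolor{blue}{P_n})$ that is strictly faster than the generic bound $\rr(\textcolor{red}{P_k},\textcolor{blue}{P_n})\le 1.(6)n+12k$ of Bednarska-Bzd\c{e}ga~\cite{Gosia}; so, in contrast with Lemma~\ref{lemma}, everything that remains is on Builder's side, and already any single one of the four bounds would be new.

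The plan I would follow is to design, separately for each $k\in\{5,6,7,8\}$, a \emph{path-growing} Builder strategy and to analyse it with a potential dual to the ones used above --- one that Builder is able to increase \emph{quickly}. Builder keeps one ``main'' blue path $B$ together with a bounded-size red gadget, and in each round offers an edge from an endpoint of $B$ (or of the red gadget) to a carefully chosen vertex. If Painter answers blue, $B$ is extended or two of its pieces are merged --- the cheap case, with amortised rate~$1$ per blue-path vertex. If Painter answers red, the red gadget grows; since $k$ is small, a red path on $k-1$ vertices is already a direct threat ($P_k$ is one edge away), so after $O(k)$ red answers Painter is forced back to blue, and those blue edges are then spliced into $B$. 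One assigns to each vertex a Builder-credit depending on its current red/blue role, shows that every round raises the total credit by at least~$1$, and notes that the game must be over --- a red $P_k$, or $B$ with $n$ vertices --- once the credit reaches $a_k(n+O(k))$; this gives $\rr(\textcolor{red}{P_k},\textcolor{blue}{P_n})\le a_k n+O(k)$. Concretely one must: (i) fix the red gadget and the order in which the donated blue edges are harvested, for each $k$; (ii) verify, over all of Painter's responses, that the credit gain is as claimed --- the mirror image of the case analysis of Section~\ref{delta9}, now optimised from Builder's viewpoint; and (iii) absorb the $O(k)$ loss in the endgame, when $B$ is already almost long enough.

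The main obstacle is step~(ii) for $k=6$. There no Builder strategy with rate below $1.(6)$ is currently known, and --- symmetrically --- no Painter strategy forcing rate above $1.5$, so even the value of $a_6$ is genuinely open and it is conceivable that $a_6>1.5$; a new structural idea, rather than a retuning of the known strategies, seems to be needed. For $k=7$ and $k=8$ the outlook is better, since the lower bounds $1.6$ and $1.(63)$ proved here lie so close to $1.(6)$ that a matching Builder strategy, should it exist, ought to be only a modest refinement of \cite{Gosia}; the bottleneck there is purely combinatorial bookkeeping --- arranging the red gadget so that the few ``wasted'' red moves are exactly compensated by the extra blue-path vertices that the improved rate promises. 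The case $k=5$ looks the most approachable and might be settled even by an explicit finite Builder strategy checked by hand or by a short computer search.
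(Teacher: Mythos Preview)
The statement is a \emph{conjecture}; the paper does not prove it and offers no argument for it beyond the remark that the authors' potential method recovers the optimal Painter rate in every case where $a_k$ is known and that they could not improve on the strategy of \cite{lo} for $k=6$. So there is no ``paper's own proof'' to compare against.

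Your write-up is not a proof either, and to your credit you say so: you correctly isolate that the lower bounds $a_5,a_6\ge 1.5$, $a_7\ge 1.6$, $a_8\ge 1.(63)$ are already in hand (from \cite{lo} and Corollary~\ref{pkpn}), and that what is missing is a Builder strategy beating the $1.(6)n+O(k)$ bound of \cite{Gosia} for each of $k=5,6,7,8$. The ``path-growing with a bounded red gadget plus a dual potential'' outline is a reasonable heuristic description of how such Builder arguments are usually built, but steps (i)--(iii) are precisely where all the content lies, and you have not carried any of them out. In particular, your own concluding paragraph concedes that for $k=6$ no idea is on the table, and that for $k=7,8$ you are hoping an unspecified ``modest refinement'' of \cite{Gosia} will work. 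That is an honest status report, but it is not a proof attempt with a fixable gap --- it is a statement that the problem is open, which matches the paper's position exactly.
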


\bibliographystyle{amsplain}
 
\appendix
\section{The method of creating strategies and potential functions}
We were using linear optimization solver from the website \url{https://online-optimizer.appspot.com/}. We made the following assumptions:
\begin{itemize}
    \item If $i<j$ and a vertex can move from $X$ to $Z$ after adding a red edge, then $c_T(Z^j)-c_T(X^j)\le c_T(Z^i)-c_T(X^i)$.
    \item $c_T(X^{2+})-c_T(X^1)=c_T(X^{1})-c_T(X^0)$ for $X\in \{L_0,L_1,N_0,N_1,F_i\}$.
\end{itemize}
We fixed the ``jump'', i.e. the maximum change of the potential in one move and maximize the minimum of numbers in the $2^+$ row. We were using the following hill climbing method of optimizing the Painter's strategy:
\begin{enumerate}
    \item Start with some strategy, run the model.
    \item Find a constraint with non-zero dual value that corresponds to Painter's choice in a specific situation.
    \item Change the strategy in this place and rerun the model.
    \item If the objective function is worse, go back to the previous strategy.
    \item Go to the point 2.
\end{enumerate}
We put our final models in \url{https://github.com/urojony/PkPn}. 

\section{Known results}
In the table, we present known lower and upper bounds for $\rr(P_k,P_n)$.

\begin{table}[H]
\begin{center}
\begin{tabular}{|l|ll|l}
\hline
\cellcolor{red!25}$k$ & \multicolumn{2}{l|}{\cellcolor{red!25} $\rr(P_k,P_n)$}  \\ \hline
 & \multicolumn{1}{l|}{lower} & upper  \\ \hline
\cellcolor{red!25}3 & \multicolumn{2}{l|}{\begin{tabular}[c]{@{}l@{}}$\lceil 1.25(n-1)\rceil$ for $n\ge 3$ \\ \cite[Thm 1.5]{lo}\end{tabular}}  \\ \hline
\cellcolor{red!25}4 & \multicolumn{2}{l|}{\begin{tabular}[c]{@{}l@{}}$\lceil 1.4n-1\rceil$ for $n\ge 3$ \\ \cite[Thm 1.6]{lo} $\wedge$ (\cite[Thm 1.4]{Gosia} $\vee$ \cite{zhang}) \end{tabular}}  \\ \cline{1-4}
\cellcolor{red!25}7 & \multicolumn{1}{l|}{{\color[HTML]{009901} $1.6n-2$     (Cor \ref{pkpn})}} &  \\ \cline{1-2} \cline{4-4}
\cellcolor{red!25}8 & \multicolumn{1}{l|}{{\color[HTML]{009901} \begin{tabular}[c]{@{}l@{}}$1.(63)n-2$\\ (Cor \ref{pkpn})\end{tabular}}} &  \\ \cline{1-2} \cline{4-4}
\cellcolor{red!25}9 & \multicolumn{1}{l|}{{\color[HTML]{009901} \begin{tabular}[c]{@{}l@{}}$1.(6)m-2$\\ (Cor \ref{pkpn})\end{tabular}}} &  \\ \cline{1-2} \cline{4-4}
\cellcolor{red!25}$k$ & \multicolumn{1}{l|}{\begin{tabular}[c]{@{}l@{}}{\color{red}$1.(6)n+ k/8-4$ for $k\ge 9$}\\ (Cor \ref{pkpn})$\wedge$ Ex \ref{1/8})
\\ $1.(6)n+k/9-4$\\ \cite[Thm 1.2]{adva}\\ $1.5 n+0.5k-4$\\ 
\cite[Thm 1.4]{lo}
\end{tabular}
}
& \multirow{-7}{*}
{\begin{tabular}[c]{@{}l@{}}$1.(6)n+12k$ \\ \cite[Thm 1.2]{Gosia}\\ $2n+2k-7$ \\ \cite[Thm 2.3]{gryt}\end{tabular}}   \\ \hline
\end{tabular}

\end{center}
\end{table}

\section{Exercises}
\begin{exe}\label{k=3} Let a potential function $f$ be given by formula $f(G)=\sum_{v\in G} c(v).$
Find a generator $c:V(G)\to \mathbb{R}$ of a potential function $f$ and a Painter's strategy such that:
\begin{itemize}
    \item if $u\in O^0$, then $c(u)=0$,
    \item if $u\in V^1$, then $c(u)\ge 5$,
    \item if $u\in V^{2+}$, then $c(u)\ge 10$,
    \item if a host graph $G$ does not have a red $P_3$ nor any red cycle and Painter follows the strategy, then the potential of the host graph does not increase by more than 8 in every move, i.e. $f(G')-f(G)\le 8$.
\end{itemize}

Conclude that $\rr(\textcolor{blue}{\cC\cup \{P_3\}},\textcolor{red}{\cH(5/4,-5/8,x)})\ge x$ and, in particular, $\rr(\textcolor{red}{P_3},\textcolor{blue}{P_n})\ge 5n/4-5/4$.

~

\textbf{Hint.} Define the generator $c$ by filling out the following table:
\begin{center}
\begin{tabular}{|r||*{6}{c|}}\hline
\backslashbox{\textcolor{blue}{blue}}{\textcolor{red}{red}}
&\cellcolor{red!25}$O$&\cellcolor{red!25}$I$ \\\hline\hline
\cellcolor{blue!25}0&0&\cellcolor{gray!25}\\\hline
\cellcolor{blue!25}1&\cellcolor{gray!25}&\cellcolor{gray!25}\\\hline
\cellcolor{blue!25}$2^+$&\cellcolor{gray!25}&\cellcolor{gray!25}\\\hline
\end{tabular}
    
\end{center}

\end{exe}
\begin{exe}\label{k=5}
Let a potential function $f$ be given by formula $f(G)=\sum_{v\in G} c(v).$
Find a generator $c:V(G)\to \mathbb{R}$ of a potential function $f$ and a Painter's strategy such that:
\begin{itemize}
    \item if $u\in O^0$, then $c(u)=0$,
    \item if $u\in V^1$, then $c(u)\ge 3$,
    \item if $u\in V^{2+}$, then $c(u)\ge 6$,
    \item if a host graph $G$ does not have a red $P_4$ nor any red cycle and Painter follows the strategy, then the potential of the host graph does not increase by more than 4 in every move.
\end{itemize}

Conclude that $\rr(\textcolor{red}{\cC\cup \{P_5\}},\textcolor{blue}{\cH(3/2,-3/4,x)})\ge x$ and, in particular, $\rr(\textcolor{red}{P_5},\textcolor{blue}{P_n})\ge 3n/2-3/2$.

~

\textbf{Hint.} Use the following table. Try to guess the meaning of $F_0$ and $F_1$ in this game and define the generator $c$ by filling out the table.
\begin{center}
\begin{tabular}{|r||*{6}{c|}}\hline
\backslashbox{\textcolor{blue}{blue}}{\textcolor{red}{red}}
&\cellcolor{red!25}$O$&\cellcolor{red!25}$F_0$&\cellcolor{red!25}$F_1$ \\\hline\hline
\cellcolor{blue!25}0&0&\cellcolor{gray!25}&\cellcolor{gray!25}\\\hline
\cellcolor{blue!25}1&\cellcolor{gray!25}&\cellcolor{gray!25}&\cellcolor{gray!25}\\\hline
\cellcolor{blue!25}$2^+$&\cellcolor{gray!25}&\cellcolor{gray!25}&\cellcolor{gray!25}\\\hline
\end{tabular}
    
\end{center}

\end{exe}

\begin{exe}\label{1/8}
~
    \begin{enumerate}
        \item Let $G,H$ be graphs. Suppose that for any subgraph $H'\subset H$ with $|E(H\setminus H')|=n$, $H'$ contains a isomorphic copy of $G$. Show that for any host graph $J$ and a graph $K$ if $\rr_J(G,K)>0$, then 
    $$\rr_J(\textcolor{red}{H},\textcolor{blue}{K})\ge \rr_J(\textcolor{red}{G},\textcolor{blue}{K})+n.$$
    \item Conclude that for any nonempty graph $G$
    $$\rr(\textcolor{red}{G},\textcolor{blue}{C_n})\ge \rr(\textcolor{red}{G},\textcolor{blue}{P_n})+1,$$
    $$\rr(\textcolor{red}{P_n},\textcolor{blue}{G})\ge \rr(\textcolor{red}{P_k},\textcolor{blue}{G})+\lfloor (n-1)/(k-1)\rfloor -1.$$
    \end{enumerate}
\end{exe}
\newpage
\section{Answers}
\textbf{C.1}.
Painter's strategy is to color the edge blue if and only if at least one of its ends is in $I$. The potential generator $c$ is defined by the following table.
\begin{center}
\begin{tabular}{|r||*{6}{c|}}\hline
\backslashbox{\textcolor{blue}{blue}}{\textcolor{red}{red}}
&\cellcolor{red!25}$O$&\cellcolor{red!25}$I$ \\\hline\hline
\cellcolor{blue!25}0&0&4\\\hline
\cellcolor{blue!25}1&5&7\\\hline
\cellcolor{blue!25}$2^+$&10&10\\\hline
\end{tabular}
\end{center}

~
\newline
\textbf{C.2}. Let us remind that $O$ is the set of isolated vertices in red host graph. For every connected component with at least one edge in the red host graph, let the capital edge of this component be the first edge that was chosen by Builder. Let $F_0$ be the set of endpoints of the capitals and $F_1$ -- all vertices that are not in $F_0$ nor $O$. Painter's strategy is to color the edge red if and only if one of the vertices is in $O$ and the other in $O\cup F_0$. The potential generator $c$ is defined by the following table.
\begin{center}
\begin{tabular}{|r||*{6}{c|}}\hline
\backslashbox{\textcolor{blue}{blue}}{\textcolor{red}{red}}
&\cellcolor{red!25}$O$&\cellcolor{red!25}$F_0$&\cellcolor{red!25}$F_1$ \\\hline\hline
\cellcolor{blue!25}0&0&2&4\\\hline
\cellcolor{blue!25}1&3&4&5\\\hline
\cellcolor{blue!25}$2^+$&6&6&6\\\hline
\end{tabular}
    
\end{center}

~\newline
\textbf{C.3.1}. We use the strategy for Painter in the game $\RR_J(\textcolor{red}{G},\textcolor{blue}{K})$ to avoid both red $G$ and blue $K$ for $\rr_J(\textcolor{red}{G},\textcolor{blue}{K})-1$ moves. After that, Painter colors the next $n$ edges red. By the assumption the host graph does not have a red $H$. It means that 
$$\rr_J(\textcolor{red}{H},\textcolor{blue}{K})> \rr_J(\textcolor{red}{G},\textcolor{blue}{K})+n-1.$$
\textbf{C.3.2}. Removing any edge from the $C_n$ gives us $P_n$. A $P_n$ path contains $\lfloor (n-1)/(k-1)\rfloor$ edge-disjoint $P_k$ paths, therefore after removing any $\lfloor (n-1)/(k-1)\rfloor-1$ edges it still contains $P_k$ as a subgraph.

\end{document}